\numberwithin{equation}{section}
\newtheorem{theorem}{Theorem}[section]
\newtheorem{lemma}[theorem]{Lemma}
\newtheorem{proposition}[theorem]{Proposition}
\newtheorem*{conjecture*}{Conjecture}
\newtheorem*{hypothesis*}{The generalized Ramanujan conjecture}
\theoremstyle{remark}
\newtheorem*{remark}{Remark}
\theoremstyle{remark}
\newtheorem*{remarks}{Remarks}
\newtheorem*{theorem*}{Theorem}
\newtheorem{conjecture}[theorem]{Conjecture}
\newcommand{\re}{\textup{Re}}
\newcommand{\im}{\textup{Im}}
\newcommand{\legen}[2]{\genfrac{(}{)}{}{}{#1}{#2}}
\title[The Explicit Sato-Tate Conjecture and Lehmer-Type Questions]{The Explicit Sato-Tate Conjecture and Densities Pertaining to Lehmer-Type Questions}
\author{Jeremy Rouse and Jesse Thorner}
\date{\today}
\begin{document}
\maketitle
\begin{abstract}
Let $f(z)=\sum_{n=1}^\infty a_f(n)q^n\in S^{\text{new}}_ k (\Gamma_0(N))$ be a newform with squarefree level $N$ that does not have complex multiplication.  For a prime $p$, define $\theta_p\in[0,\pi]$ to be the angle for which $a_f(p)=2p^{( k -1)/2}\cos \theta_p $.  Let $I\subset[0,\pi]$ be a closed subinterval, and let $d\mu_{ST}=\frac{2}{\pi}\sin^2\theta d\theta$ be the Sato-Tate measure of $I$.  Assuming that the symmetric power $L$-functions of $f$ satisfy certain analytic properties (all of which follow from Langlands functoriality and the Generalized Riemann Hypothesis), we prove that if $x$ is sufficiently large, then
\[
\left|\#\{p\leq x:\theta_p\in I\} -\mu_{ST}(I)\int_2^x\frac{dt}{\log t}\right|\ll\frac{x^{3/4}\log(N k x)}{\log x}
\]
with an implied constant of $3.33$.  By letting $I$ be a short interval centered at $\frac{\pi}{2}$ and counting the primes using a smooth cutoff, we compute a lower bound for the density of positive integers $n$ for which $a_f(n)\neq0$.  In particular, if $\tau$ is the Ramanujan tau function, then under the aforementioned hypotheses, we prove that
\[
\lim_{x\to\infty}\frac{\#\{n\leq x:\tau(n)\neq0\}}{x}>1-1.54\times10^{-13}.
\]
We also discuss the connection between the density of positive integers $n$ for which $a_f(n)\neq0$ and the number of representations of $n$ by certain positive-definite, integer-valued quadratic forms.
\end{abstract}

\section{Introduction and Statement of Results}

Let $\pi(x)=\#\{p\leq x:\text{$p$ is prime}\}$.  By the modified proof of the prime number theorem due to de la Vall\'ee-Poussin in 1899, there exists a constant $c>0$ such that
\[
\pi(x)=\text{Li}(x)+O\left(xe^{-c\sqrt{\log x}}\right),\hspace{.25in}\text{Li}(x)=\int_2^x \frac{dt}{\log t}.
\]
This result follows from detailed analysis of the nontrivial zeroes of the Riemann zeta function $\zeta(s)$.  In particular, Hadamard and de la Vall\'ee-Poussin proved that if $\rho=\beta+i\gamma$ is a nontrivial zero of $\zeta(s)$ (that is, $\rho$ is in the critical strip $0<\re(s)\leq 1$), then $\beta<1$.  Riemann conjectured that even more is true; he predicted that $\re(\rho)=\frac{1}{2}$ for all nontrivial zeroes $\rho$.  Much evidence favors this hypothesis; for example, Conrey, Iwaniec, and Soundararajan \cite{CIS} proved that over 56\% of all nontrivial zeros of all Dirichlet $L$-functions lie on the line $\re(s)=\frac{1}{2}$.  Assumption of the Riemann Hypothesis improves de la Vall\'ee-Poussin's estimate to
\[
\pi(x)=\text{Li}(x)+O(\sqrt{x}\log x).
\]
Though a proof of the Riemann Hypothesis has eluded mathematicians for the last 150 years, the supposition of its truth has far-reaching consequences.  In this paper, we use an approach similar to that of de la Vall\'ee Poussin to count primes in a different setting.  We will assume the Generalized Riemann Hypothesis, which states that the nontrivial zeroes of other normalized $L$-functions have real part equal to $\frac{1}{2}$.

Let $q=e^{2\pi iz}$ with $\im(z)>0$, and let
\[
f(z)=\sum_{n=1}^{\infty} a_f(n)q^n\in S_k^{\text{new}}(\Gamma_0(N))
\]
be a cusp form of even weight $k\geq2$ and level $N$ such that $a_f(1)=1$ and $f$ is an eigenform of all Hecke operators and of the Atkin-Lehner involutions $\mid_k W(N)$ and $\mid_k W(Q_p)$ for all $p\mid N$.  For brevity, we call such a cusp form a {\it newform}.  (See \cite[Chapter 5, Section 2]{OnoBook} for further details and discussion.)  Furthermore, we require that $f$ not have complex multiplication (CM).  Deligne's proof of the Weil conjectures implies (among many other things) that if $p$ is prime, then there exists an angle $\theta_p\in[0,\pi]$ such that
\[
a_f(p)=2p^{(k-1)/2}\cos \theta_p .
\]
It is natural to consider the distribution of the sequence $\{\theta_p\}$ in the interval $[0,\pi]$.  The Sato-Tate Conjecture, now a theorem due to Barnet-Lamb, Geraghty, Harris, and Taylor \cite{Sato-Tate}, gives us this distribution.
\begin{theorem*}[The Sato-Tate Conjecture]
\label{sato-tate-conjecture}
Let $f(z)\in S_k^{\textup{new}}(\Gamma_0(N))$ be a non-CM newform.  If $F:[0,\pi]\to\mathbb{C}$ is a continuous function, then
\[
\lim_{x\to\infty}\frac{1}{\pi(x)}\sum_{p\leq x}F(\theta_p)=\int_0^\pi F(\theta)~d\mu_{ST},
\]
where $d\mu_{ST}= \frac{2}{\pi}\sin^2 \theta ~d\theta$ is the Sato-Tate measure.  In particular, if we define
\[
\pi_{f,I}(x):=\#\{p\leq x:\theta_p\in I\},
\]
then
\[
\pi_{f,I}(x)\sim \mu_{ST}(I)\mathrm{Li}(x).
\]
\end{theorem*}

\begin{remark}
See \cite{Sato-Tate} for a variant of the Sato-Tate Conjecture that holds for newforms in $S_k^{\mathrm{new}}(\Gamma_0(N),\chi)$ with $\chi$ a nontrivial Dirichlet character.
\end{remark}

The Sato-Tate conjecture is a consequence of analytic properties of symmetric
power $L$-functions.  If we write the degree 2 $L$-function of $f$ as
\[
  L(s,f) = \sum_{n=1}^{\infty} \frac{a_f(n)}{n^{s + \frac{k-1}{2}}}
  = \prod_{p} (1 - \alpha_{p} p^{-s})^{-1} (1 - \beta_{p} p^{-s})^{-1},
\]
then the symmetric power $L$-functions of $f$ are defined by
\[
  L(s, \mathrm{Sym}^n f ) = \prod_{p | N} L_{p}(s, \mathrm{Sym}^n f ) \prod_{p \nmid N} \prod_{j=0}^{n}
  (1 - \alpha_{p}^{j} \beta_{p}^{n-j} p^{-s})^{-1},
\]
and the Sato-Tate conjecture is equivalent to the statement
that these $L$-functions have no zeroes or poles on the line $s = 1$.  If $p\nmid N$, then we have that $\alpha_p=e^{i\theta_p}$ and $\beta_p=e^{-i\theta_p}$.


When $f\in S_2^{\textup{new}}(\Gamma_0(N))$ is the newform associated to a non-CM elliptic curve $E/\mathbb{Q}$ of conductor $N$, it follows from the work of V. K. Murty \cite{Murty} and Bucur and Kedlaya \cite{BK} that if all of the symmetric power $L$-functions of $f$ are automorphic and satisfy the Generalized Riemann Hypothesis, then
\begin{equation}
\label{murty}
\pi_{f,I}(x)=\mu_{ST}(I)\mathrm{Li}(x)+O\left(x^{3/4}\sqrt{\log(N x)}\right).
\end{equation}
Bucur and Kedlaya \cite{BK} extend this result to arbitrary motives with applications toward elliptic curves over totally real fields.  In particular, let $K$ be a totally real number field with ring of integers $\mathcal{O}_K$, and let $E_1/K$ and $E_2/K$ be two $\bar{\mathbb{Q}}$-nonisogenous  non-CM  elliptic curves with respective conductors $N_1\neq N_2$.  For $i=1,2$ and $\mathfrak{p}\subset\mathcal{O}_K$ a prime ideal, let $a_{E_i}(\mathfrak{p})=\mathrm{Norm}(\mathfrak{p})+1-\#E_i(\mathcal{O}_K/\mathfrak{p})$.  Bucur and Kedlaya apply their results to show that there is a prime ideal $\mathfrak{p}$ with $\text{Norm}(\mathfrak{p})\nmid N_1 N_2$ and $\text{Norm}(\mathfrak{p})=O((\log N_1N_2)^2(\log\log 2N_1N_2)^2)$ at which $a_{E_1}(\mathfrak{p})a_{E_2}(\mathfrak{p})<0$.  This problem is an elliptic curve analogue of the least quadratic nonresidue problem.

In this paper, we prove a completely explicit version of the Sato-Tate Conjecture that applies to every newform of even weight $k\geq2$ on $\Gamma_0(N)$ with $N$ squarefree, in which case the newforms in question are guaranteed to be non-CM.  Furthermore, we improve the error term
in \eqref{murty} by a factor of $\sqrt{\log x}$. We assume the following analytic hypotheses.

\begin{conjecture}
\label{automorphy-GRH}
Let $f\in S_k^{\textup{new}}(\Gamma_0(N))$ be a non-CM newform with squarefree level $N$ and even integer weight $k\geq2$.  For each integer $n\geq0$, the following are true.
\begin{enumerate}[label=(\alph*)]
\item The conductor of $ L(s,\mathrm{Sym}^n f) $ is $q_{\mathrm{Sym}^{n} f} = N^{n}$.
\item The equation for the gamma factor of $L(s,\mathrm{Sym}^n f)$ is
\[
\gamma(s, \mathrm{Sym}^n f )=
\begin{cases}
\displaystyle\prod_{j=1}^{(n+1)/2}\Gamma_{\mathbb{C}}(s+(j-1/2)( k -1))  &\mbox{if $n$ is odd,} \\
\displaystyle\Gamma_{\mathbb{R}}(s+r)\prod_{j=1}^{n/2}\Gamma_{\mathbb{C}}(s+j( k -1)) & \mbox{if $n$ is even,}
\end{cases}
\]
where $\Gamma_{\mathbb{R}}(s) = \pi^{-s/2} \Gamma(s/2)$, $\Gamma_{\mathbb{C}}(s)
= 2 (2 \pi)^{-s} \Gamma(s)$, and $r = \frac{n}{2} \bmod 2$. ($\Gamma(s)$ denotes
the usual gamma function.)
\item For each prime $p | N$, $L_{p}(s, \mathrm{Sym}^n f ) = (1 - (-\lambda_{p} p^{-1/2})^{n} p^{-s})^{-1}$, where $\lambda_{p}\in\{-1,1\}$ is eigenvalue of the Atkin-Lehner operator
$W(p)$ acting on $f$.
\item The $L$-function $L(s,\mathrm{Sym}^n f)$ satisfies the functional equation
\[
  \Lambda(s,\mathrm{Sym}^n f) := q_{ \mathrm{Sym}^n f }^{s/2} \gamma(s,\mathrm{Sym}^n f)
   L(s,\mathrm{Sym}^n f)  = \epsilon_{ \mathrm{Sym}^n f } \Lambda(1-s,\mathrm{Sym}^n f),
\]
where $\epsilon_{\mathrm{Sym}^n f}$ is a certain complex number of modulus 1.
\item The completed $L$-function $(\frac{1}{2}s(1-s))^{\delta_{n,0}}\Lambda(s,\mathrm{Sym}^n f)$ is entire, where $\delta_{n,0}$ is $1$ if $n=0$ and $0$ otherwise.
\item The Generalized Riemann Hypothesis (GRH) holds for each symmetric power $L$-function; that is, for each $n\geq0$, each zero of $\Lambda(s, \mathrm{Sym}^n f )$ has real part equal to $\frac{1}{2}$.
\end{enumerate}
\end{conjecture}

It is reasonable to assume parts (a)-(e) of Conjecture \ref{automorphy-GRH} because Langlands functoriality predicts that $L(s,\mathrm{Sym}^n f)$ is the $L$-function of a
cuspidal automorphic representation $\Pi$ on
$\mathrm{GL}_{n+1}(\mathbb{A}_{\mathbb{Q}})$. More specifically, the newform $f$ corresponds to an
automorphic representation $\pi$ on $\mathrm{GL}_{2}(\mathbb{A}_{\mathbb{Q}})$ and
$\pi$ has a tensor product decomposition $\pi = \otimes_{p \leq
  \infty} \pi_{p}$. It is conjectured that there is an automorphic
representation $\Pi = \otimes_{p \leq \infty} \Pi_{p}$, where $\Pi_{p}
= {\rm Sym}^{n}(\pi_{p})$ is the representation of $\mathrm{GL}_{n+1}(\mathbb{Q}_{p})$
that is the local symmetric $n$-th power lift of $\pi_{p}$ (which
exists and is unique via the local Langlands
correspondence). Moreover, it is conjectured that if $f$ does not have
CM, then $\Pi$ is cuspidal.

Assuming a global lifting map on automorphic representations that is
compatible with the local Langlands correspondence, Cogdell and Michel \cite{CM}
compute the predicted equations for the conductor, gamma factor, and
root number of $L(s,\mathrm{Sym}^n f) $. Such a global lifting map is known
unconditionally for $n = 1, 2, 3$ and $4$ (see \cite{GJ, Kim, KS1, KS2}), and Conjecture~\ref{automorphy-GRH} (with the exception of GRH) is known under those
assumptions.

Our main result is the following.
\begin{theorem}
\label{main-theorem}
Let $f(z)=\sum_{n=1}^{\infty}a_f(n)q^n\in S_k^{\textup{new}}(\Gamma_0(N))$ be a newform which satisfies Conjecture \ref{automorphy-GRH}, and let $I=[\alpha,\beta]\subset[0,\pi]$.  If $x\geq 2$ and $\mathfrak{q}(f)=N(k-1)$, then
\[
|\pi_{f,I}(x) -\mu_{ST}(I)\mathrm{Li}(x)|\leq3.33x^{3/4}-\frac{3x^{3/4}\log\log x}{\log x}+\frac{202x^{3/4}\log\mathfrak{q}(f)}{\log x}.
\]
\end{theorem}
\begin{remarks}
1.  If $\log x\geq \mathfrak{q}(f)^{68}$, then
\[
|\pi_{f,I}(x) -\mu_{ST}(I)\mathrm{Li}(x)|\leq3.33x^{3/4}.
\]

2.  In \cite{AIW}, Arias-de-Reyna, Inam, and Wiese use this result to prove that
if $I \subseteq [0,\pi]$, then the set of primes $\mathcal{P}_{f,I}
= \{ p : \theta_p \in I \}$ is regular; that is, there
exists a function $g(z)$, holomorphic in $\{ z : {\rm Re}(z) \geq 1 \}$ so that
\[
  \sum_{p \in \mathcal{P}_{f,I}} \frac{1}{p^{z}} = \mu_{ST}(I) \log\left(\frac{1}{1-z}\right) + g(z).
\]
\end{remarks}

Using effective versions of the Sato-Tate Conjecture with a reasonably small error term, one can study the distribution of primes $p$ for which $a_f(p)=c$ for some fixed $c\in\mathbb{R}$.  To study such primes, note that if $c$ is fixed and
\[
a_f(p)=2p^{\frac{k-1}{2}}\cos \theta_p =c,
\]
then as $p$ grows, $\theta_p$ tends to $\frac{\pi}{2}$.  Therefore, studying the primes $p$ for which $\theta_p$ is close to $\frac{\pi}{2}$ allows us to produce bounds on the quantity
\begin{equation}
\label{pi-zero}
\pi_{c,f}(x)=\#\{x<p\leq 2x:a_f(p)=c\}.
\end{equation}
Lang and Trotter \cite{Lang-Trotter} predicted that for some constant $K_{c,f}\geq0$, depending on the fixed number $c$ and the newform $f$, we have
\begin{equation}
\label{eqn:LT}
\pi_{c,f}(x)\sim\begin{cases}
\displaystyle K_{c,f}\frac{\sqrt{x}}{\log x}&\mbox{if $k=2$},\\
K_{c,f}&\mbox{if $k\geq 4$}.
\end{cases}
\end{equation}

Serre \cite{Ser1} studied $\pi_{c,f}(x)$ extensively using variants of the effective versions of the Chebotarev Density Theorem proven by to Lagarias and Odlyzko \cite{LO}. Serre proved that for any $\epsilon>0$, we have
\[
\pi_{c,f}(x)\ll\begin{cases}
x(\log x)^{-3/2+\epsilon}&\mbox{unconditionally},\\
x^{7/8}(\log x)^{-1/2}&\mbox{if $c\neq0$ and GRH holds for Artin $L$-functions},\\
x^{3/4}&\mbox{if $c=0$ and GRH holds for Artin $L$-functions.}
\end{cases}
\]
Under GRH for Artin $L$-functions, M. R. Murty, V. K. Murty, and S. Saradha \cite{MMS} improved on the above result, showing that if $c\neq0$, then
\[
\pi_{c,f}(x)\ll\frac{x^{4/5}}{(\log x)^{1/5}}.
\]
Building on the ideas in \cite{MMS}, V. K. Murty \cite{Murty2} improved Serre's unconditional result, showing that for all $c$, we have
\[
\pi_{c,f}(x)\ll \frac{x(\log\log x)^2}{(\log x)^2}.
\]

Stronger unconditional results exist when $c=0$ and $f(z)$ is the newform associated to a non-CM elliptic curve $E/\mathbb{Q}$, in which case $k=2$.  By the modularity theorem, the primes $p\geq5$ such that $p\nmid N$ and $a_f(p)=0$ are the primes for which $E$ has supersingular reduction.  Noam Elkies \cite{Elkies-1} proved that infinitely many such primes exist.  By the work of Elkies, Fouvry, Kaneko, and M. R. Murty \cite{Elkies-2, FM}, when $f$ is the newform of such an elliptic curve $E/\mathbb{Q}$, we have that for any $\epsilon>0$,
\[
\frac{\log\log\log x}{(\log\log\log\log x)^{1+\epsilon}}\ll \pi_{0,f}(x)\ll x^{3/4}.
\]

Assuming Conjecture \ref{automorphy-GRH}, we consider the primes $p$ for which $\theta_p$ is close to $\frac{\pi}{2}$, bounding the number of primes $x<p\leq 2x$ for which $|a_f(p)|$ is small.  This enables us to give improved upper bounds on $\pi_c(x)$ for all $c$.

\begin{theorem}
\label{count-primes-zero}
Let $f(z)=\sum_{n=1}^{\infty}a_f(n)q^n\in S_k^{\textup{new}}(\Gamma_0(N))$ be a newform which satisfies Conjecture \ref{automorphy-GRH}.  If $x\geq 2$ and $\mathfrak{q}(f)=N(k-1)$, then
\[
\#\{x<p\leq 2x: |a_f(p)|\leq 0.86p^{\frac{k-1}{2}-\frac{1}{4}}\sqrt{\log p}\}
\]
is bounded above by
\[
\frac{9.3 x^{3/4}}{\sqrt{\log x}}-\frac{9.2 x^{3/4} \log \log x}{(\log x)^{3/2}}+\frac{50 x^{3/4}}{(\log x)^{3/2}}+43 \sqrt{x}+(\log \mathfrak{q}(f)) \left(\frac{19 x^{3/4}}{(\log x)^{3/2}}+\frac{22 \sqrt{x}}{\log x}\right).
\]
In particular, if
\[
x\geq 2+\begin{cases}
5\left(\frac{c^2}{\log(c^2+1)}\right)^{\frac{2}{2k-3}}&\mbox{if $c\neq0$,}\\
1&\mbox{if $c=0$},
\end{cases}
\]
then $\pi_{c,f}(x)$ also satisfies this bound.
\end{theorem}

\begin{remarks}
1.  Under the same conditions as Theorem \ref{count-primes-zero}, the first author proved in \cite{Rouse} that if $k\geq4$ and $0\leq\alpha\leq\frac{1}{8}$, then
\[
\#\{x<p\leq 2x:|a_f(p)|\leq p^{\frac{k-1}{2}-\alpha}\}\asymp\frac{x^{1-\alpha}}{\log x}.
\]
The proof of Theorem \ref{count-primes-zero} makes the work in \cite{Rouse} completely explicit.

2.  By counting primes with a smooth cutoff, Zywina \cite{Zywina} recently sharpened the aforementioned arguments of Serre and M. R. Murty, V. K. Murty, and Saradha to show that under the Generalized Riemann Hypothesis for Artin $L$-functions, one has that
\[
\pi_{c,f}(x)\ll\begin{cases}
x^{4/5}(\log x)^{-3/5}&\mbox{if $c\neq0$},\\
x^{3/4}(\log x)^{-1/2}&\mbox{if $c=0$}.
\end{cases}
\]
Thus it appears that the current upper bounds for $\pi_{c,f}(x)$ cannot be improved without the input of a fundamentally new idea.
\end{remarks}

For certain newforms, very strong forms of the Lang-Trotter Conjecture are suspected to hold.  For example, consider the  newform $f(z)\in S_{12}^{\text{new}}(\Gamma_0(1))$ given by
\[
\Delta_{12}(z)=q\prod_{n=1}^{\infty}(1-q^n)^{24}=\sum_{n=1}^\infty \tau_{12}(n)q^n,
\]
where $\tau_{12}(n)$ is the Ramanujan tau function.  In \cite{Leh}, D. H. Lehmer pondered whether $\tau_{12}(n)\neq0$ for all $n\geq1$; this question remains open, as do similar conjectures for other newforms on $\Gamma_0(1)$.  For $k\in\{12,16,18,20,22,26\}$, the space $S_k^{\mathrm{new}}(\Gamma_0(1))$ is one-dimensional and is spanned by
\[
\Delta_k(z)=\Delta_{12}(z)E_{k-12}(z)=\sum_{n=1}^\infty \tau_k(n)q^n,
\]
where $E_k(z)$ is the weight $k$ Eisenstein series (see Section 2).
\begin{conjecture}
\label{Lehmer}
For all $k\in\{12,16,18,20,22,26\}$ and all positive integers $n$, $\tau_k(n)\neq0$.  In other words, for the aforementioned values of $k$, we have $K_{0,\Delta_k}=0$ in \eqref{eqn:LT}.
\end{conjecture}

Serre \cite{Ser1} proved that if $f(z)$ is a non-CM newform, then
\begin{equation}
\label{Serre-density}
\lim_{x\to\infty}\frac{\#\{n\leq x:a_f(n)\neq0\}}{x}=\alpha_f \prod_{a_f(p)=0}\left(1-\frac{1}{p+1}\right),
\end{equation}
where $\alpha_f\in(0,1]$ is a constant which is given in the proof of Theorem 16 of \cite{Ser1}.  When $f(z)\in S_k^{\text{new}}(\Gamma_0(1))$, Serre proved that $\alpha_f=1$.  When $f(z) = \eta^{2}(z) \eta^{2}(11z)$, with
$\eta(z) = q^{1/24} \prod_{n=1}^{\infty} (1-q^{n})$ the usual Dedekind-$\eta$
function, we have that $f(z)\in S_2^{\text{new}}(\Gamma_0(11))$.  In this case, Serre proved that $\alpha_f=\frac{14}{15}$.  He computed the upper bound
\begin{equation}
\label{eqn:serre_density_x011}
\lim_{x\to\infty}\frac{\#\{n\leq x:a_f(n)\neq0\}}{x}<0.847
\end{equation}
and conjectured a lower bound of 0.845.

Using \eqref{Serre-density} and Theorem \ref{count-primes-zero}, we establish a method for computing an explicit lower bound for the density of positive integers $n$ for which $a_f(n)$ is nonzero.  In the cases where $f(z)\in S_{k}^\text{new}(\Gamma_0(1))$ and $f(z)\in S_2^\text{new}(\Gamma_0(11))$, we use this method to prove our third result.

\begin{theorem}
\label{densities}
Suppose that $f(z)=\sum_{n=1}^\infty a_f(n)q^n\in S_{k}^{\mathrm{new}}(\Gamma_0(N))$ is a newform which satisfies Conjecture \ref{automorphy-GRH}.  Define
\[
D_{f}=\lim_{x\to\infty}\frac{\#\{n\leq x:a_f(n)\neq0\}}{x}.
\]
Then the following table provides strict lower bounds for $D_{f}$.
\small
\begin{table}[ht]
\label{density-chart}
\centering
\begin{tabular}{| c | c |}
\hline
$f$ & Strict Lower Bound for $D_{f}$ \\ [0.5ex]
\hline
$\eta^{2}(z) \eta^{2}(11z)$ & $0.8306$\\
\hline
$\Delta_{12}(z)$ & $1-1.54\times10^{-13}$\\
\hline
$\Delta_{16}(z)$ & $1-5.04\times10^{-16}$\\
\hline
$\Delta_{18}(z)$ & $1-5.92\times10^{-17}$\\
\hline
$\Delta_{20}(z)$ & $1-1.35\times10^{-17}$\\
\hline
$\Delta_{22}(z)$ & $1-1.35\times10^{-17}$\\
\hline
$\Delta_{26}(z)$ & $1-3.97\times10^{-18}$\\
\hline
\end{tabular}
\end{table}
\end{theorem}

The proof of Theorem \ref{densities} relies on the fact if $\tau_k(n)=0$, then $n$ must satisfy specific congruences modulo powers of small primes (see Theorem \ref{level1cong}).  The congruences arise from the Galois representations of $\Delta_k(z)$ modulo these primes.  The congruences help us raise the lower bounds on the densities in Theorem \ref{densities}, but the congruences are also of independent interest.

Theorem \ref{count-primes-zero} and Theorem \ref{densities} have useful applications in the study of positive-definite, integer-valued quadratic forms.  Let $Q:\mathbb{Z}^{4r}\to\mathbb{Z}$ be a positive definite, integer-valued quadratic form in $4r$ variables, where $Q(\vec{x})=\frac{1}{2}\vec{x}^T A\vec{x}$ and $\det(A)$ is a square.  Let $N$ be the smallest positive integer such that $NA^{-1}$ has integer entries and even diagonal entries.  If $r_Q(n)=\#\{\vec{x}\in\mathbb{Z}^{4r}:Q(\vec{x})=n\}$, then
\[
\Theta_Q(z)=\sum_{n=0}^{\infty}r_Q(n)q^n\in M_{2r}(\Gamma_0(N)),
\]
where $M_{k}(\Gamma_0(N))$ denotes the space of modular forms of weight $k$ and level $N$.  We can decompose $\Theta_Q(z)$ into the sum of an Eisenstein series
\[
E_Q(z)=\sum_{n=0}^{\infty}a_{\textup{Eis}}(n)q^n
\]
and a cusp form
\[
C_Q(z)=\sum_{n=1}^{\infty}a_{\textup{Cusp}}(n)q^n,
\]
the latter of which can be expressed as a linear combination of newforms in $S_{2r}^{\textup{new}}(\Gamma_0(N))$ (and the images of newforms under the operator $V(d)$ when $d|N$).  Thus we have the identity $r_Q(n)=a_{\textup{Eis}}(n)+a_{\textup{Cusp}}(n)$, where $a_{\textup{Cusp}}(n)$ is approximately $a_{\textup{Eis}}(n)^{1/2+\epsilon}$ for any $\epsilon>0$.  Theorem \ref{count-primes-zero} and Theorem \ref{densities} can be used to analyze the accuracy with which $a_{\textup{Eis}}(n)$ approximates $r_Q(n)$ in the event that $C_{Q}(z)$ is a linear combination of newforms with squarefree level and their images under $V(d)$.  We analyze two examples.

First, consider the quadratic form
\[
Q_1(x,y,z,w)=x^2+y^2+3z^2+3w^2+xz+yw.
\]
It is straightforward to verify that $\Theta_{Q_1}(z)\in M_2(\Gamma_0(11))$.  Furthermore, we can decompose $\Theta_{Q_1}(z)$ as the sum of the Eisenstein series
\[
E_{Q_1}(z)=1+\frac{12}{5}\sum_{n=1}^{\infty}(\sigma_1(n)-11\sigma_1(n/11))q^n
\]
and the cusp form $\frac{8}{5}C_{Q_1}(z)$, where $C_{Q_1}(z) = \eta^{2}(z) \eta^{2}(11z)$. Theorem \ref{count-primes-zero} bounds the number of primes $p$ for which $r_{Q_1}(p)=\frac{12}{5}\sigma_1(p)=\frac{12}{5}(p+1)$.  Furthermore, by Serre's calculation \eqref{eqn:serre_density_x011}, the density of positive integers $n$ for which $r_{Q_1}(n)$ equals its Eisenstein approximation, that is to say
\[
r_{Q_1}(n)=\frac{12}{5}(\sigma_1(n)-11\sigma_1(n/11)),
\]
is at least $0.153$.  By Theorem \ref{densities}, this density is at most $0.1693$.

Second, consider the quadratic form
\begin{equation}
\label{eqn:Q2}
Q_2(x_1,x_2,\ldots,x_{24})=\sum_{k=1}^{24}x_k^{2}.
\end{equation}
It is straightforward to verify that $\Theta_{Q_2}(z)\in M_{12}(\Gamma_0(4))$.  As stated in \cite{BMazur}, we can represent $\Theta_{Q_2}(z)$ as the sum of the Eisenstein series
\[
E_{Q_2}(z)=1+\frac{16}{691}\sum_{n=1}^{\infty}(\sigma_{11}(n)-2\sigma_{11}(n/2)+4096\sigma_{11}(n/4))q^n
\]
and the cusp form
\[
C_{Q_2}(z)=\frac{128}{691}\sum_{n=1}^{\infty}(259\tau_{12}(n)+11920\tau_{12}(n/2)+1060864\tau_{12}(n/4))q^n.
\]
Theorem \ref{count-primes-zero} bounds the number of primes $p$ for which $r_{Q_2}(p)=\frac{16}{691}\sigma_{11}(p)=\frac{16}{691}(p^{11}+1)$.  However, it follows immediately that if Conjecture \ref{Lehmer} is true when $k=12$, then there are no such primes $p$.  Our final result extends this fact to composite $n$.

\begin{theorem}
\label{tau-QF}
Let $Q_2(x_1,x_2,\ldots,x_{24})$ be given by \eqref{eqn:Q2}, and let $n$ be a positive integer.  We have that $r_{Q_2}(n)$ equals its Eisenstein approximation, that is to say
\[
r_{Q_2}(n)=\frac{16}{691}(\sigma_{11}(n)-2\sigma_{11}(n/2)+4096\sigma_{11}(n/4)),
\]
if and only if $\tau_{12}(n)=0$.
\end{theorem}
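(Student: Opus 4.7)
The plan is to observe that the equality stated in the theorem is equivalent to the vanishing of the $n$-th Fourier coefficient of $C_{Q_2}(z)$, i.e.\ to the condition
\[
259\tau_{12}(n) + 11920\tau_{12}(n/2) + 1060864\tau_{12}(n/4) = 0,
\]
and to show that this holds if and only if $\tau_{12}(n)=0$. Writing $n = 2^a m$ with $m$ odd and applying multiplicativity of $\tau_{12}$, the displayed sum factors as $\tau_{12}(m)E(a)$, where
\[
E(a) = 259\tau_{12}(2^a) + 11920\tau_{12}(2^{a-1}) + 1060864\tau_{12}(2^{a-2})
\]
(with the convention $\tau_{12}(2^{-1})=\tau_{12}(2^{-2})=0$), while $\tau_{12}(n) = \tau_{12}(m)\tau_{12}(2^a)$. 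Thus everything reduces to proving that $\tau_{12}(2^a) \neq 0$ and $E(a) \neq 0$ for every $a \geq 0$.

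My strategy is to compute the $2$-adic valuation of both quantities. Using the Hecke recursion $\tau_{12}(2^{a+1}) = -24\tau_{12}(2^a) - 2048\tau_{12}(2^{a-1})$ together with the initial values $\tau_{12}(1)=1$ and $\tau_{12}(2)=-24$, I plan to prove by induction that $v_2(\tau_{12}(2^a)) = 3a$ for all $a \geq 0$: in the inductive step, the two summands on the right have distinct valuations $3(a+1)$ and $3a+8$, so no cancellation can occur. This immediately gives $\tau_{12}(2^a) \neq 0$.

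I will then simplify $E(a)$. For $a \geq 2$, solving the Hecke recursion for $\tau_{12}(2^{a-2})$ and substituting rewrites $E(a)$ as
\[
E(a) = -259\tau_{12}(2^a) - 512\tau_{12}(2^{a-1}),
\]
an identity that also holds at $a=1$ by direct verification; and $E(0) = 259$ is nonzero on its face. For $a \geq 1$ the two terms have $2$-adic valuations $3a$ and $3a+6$ respectively (since $259$ is odd and $512 = 2^9$), so again they cannot cancel and $v_2(E(a)) = 3a$, whence $E(a) \neq 0$. The main obstacle is simply recognizing the clean $2$-adic structure: once the valuation formula $v_2(\tau_{12}(2^a)) = 3a$ is spotted, both non-vanishing claims follow from the same cancellation-free valuation argument.
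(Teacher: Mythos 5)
Your proposal is correct, and it takes a genuinely different route from the paper's argument. Both start from the same reduction: write $n = 2^a m$ with $m$ odd, use multiplicativity to factor the vanishing condition as $\tau_{12}(m)E(a) = 0$ versus $\tau_{12}(m)\tau_{12}(2^a) = 0$, and then show $\tau_{12}(2^a)$ and $E(a)$ (which the paper calls $f_\alpha$) never vanish. From there the approaches diverge entirely. The paper cites Lehmer's theorem to get $\tau_{12}(2^a)\neq 0$, solves the linear recurrence $f_\alpha = -24f_{\alpha-1} - 2048 f_{\alpha-2}$ in closed form over $K = \Q[\sqrt{-119}]$, and then rules out vanishing by a prime-ideal valuation argument in $\mathcal{O}_K$ that requires a Magma computation to find a suitable prime $\mathfrak{p}$ above $3$. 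You instead prove both nonvanishing claims by a single elementary $2$-adic valuation computation: the Hecke recursion $\tau_{12}(2^{a+1}) = -24\tau_{12}(2^a) - 2048\tau_{12}(2^{a-1})$ forces $v_2(\tau_{12}(2^a)) = 3a$ by induction (the two terms have valuations $3(a+1)$ and $3a+8$, so no cancellation), and the reduction $E(a) = -259\tau_{12}(2^a) - 512\tau_{12}(2^{a-1})$ for $a\geq 1$ (valuations $3a$ and $3a+6$) gives $v_2(E(a)) = 3a < \infty$. Your approach is self-contained, purely elementary, and replaces the computer-assisted algebraic number theory with a short valuation argument; in fact the substitution step is even optional, since the three terms in the original expression for $E(a)$ already have pairwise distinct $2$-adic valuations $3a$, $3a+1$, $3a+6$ when $a\geq 2$ (as $v_2(259)=0$, $v_2(11920)=4$, $v_2(1060864)=12$).
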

\noindent
Theorem \ref{densities} then tells us that the density of positive integers $n$ for which $r_{Q_2}(n)$ equals its Eisenstein approximation is less than $1.54\times10^{-13}$.

In Section \ref{sec:background}, we provide the requisite background on symmetric power $L$-functions and Galois representations.  In Section \ref{sec:proofs_of_main_theorems}, we prove Theorems \ref{main-theorem} and \ref{count-primes-zero} assuming Propositions \ref{key-estimates-ab} and \ref{key-estimates-zero}, respectively.  Propositions \ref{key-estimates-ab} and \ref{key-estimates-zero} are proven in Sections \ref{sec:mellin}-\ref{proof-ab}.  In Section \ref{sec:lehmer_densities}, we prove the aforementioned congruences as well as Theorems \ref{densities} and \ref{tau-QF}.

\subsection*{Acknowledgements}

The authors thank Daniel Fiorilli, Ken Ono, Professor Jean-Pierre
Serre, Kannan Soundararajan, Gabor Wiese, and the anonymous referee
for helpful comments. The authors used Magma, Mathematica, and
PARI/GP for computations.

\section{Background}
\label{sec:background}

\subsection{Symmetric power $L$-functions}

Define the numbers $\Lambda_{ \mathrm{Sym}^n f }(j)$ by
\[
-\frac{L'}{L}(s, \mathrm{Sym}^n f )=\sum_{j=1}^{\infty}\frac{\Lambda_{ \mathrm{Sym}^n f }(j)}{j^s},\qquad \re(s)>1.
\]
Let $U_n(x)$ be the $n$-th Chebyshev polynomial of the second type.  Assuming Conjecture \ref{automorphy-GRH}, a straightforward computation shows that for any integer $j$, we have that
\begin{equation}
\label{von-Mangoldt-def}
\Lambda_{ \mathrm{Sym}^n f }(j)=
\begin{cases}
U_n(\cos(m\theta_p))\log p  &\mbox{if $j=p^m$ for some $p\nmid N$ and $m\geq1$,} \\
t_{m,n,p}p^{-mn/2}\log p & \mbox{if $j=p^m$ for some $p\mid N$ and $m\geq1$,}\\
0&\mbox{otherwise,}
\end{cases}
\end{equation}
where $|t_{m,n,p}|=1$.  If $\Lambda(j)$ is the von Mangoldt function, then $|\Lambda_{ \mathrm{Sym}^n f }(j)|\leq j^{-n/2}\Lambda(j)$ whenever $\gcd(j,N)>1$.  Additionally, when $\gcd(j,N)=1$, we have that $|\Lambda_{ \mathrm{Sym}^n f }(j)|\leq(n+1)\Lambda(j)$.  Thus under Conjecture \ref{automorphy-GRH}, we have that $|\Lambda_{ \mathrm{Sym}^n f }(j)|\leq(n+1)\Lambda(j)$ for all $j$, and so $L(s,\mathrm{Sym}^n f)$ satisfies the Ramanujan-Petersson Conjecture.

For future convenience, we record the following identity, which is immediate from Conjecture \ref{automorphy-GRH}, part (b):
{\small
\begin{equation}
\label{gamma-factor}
\frac{\gamma'}{\gamma}(s,\mathrm{Sym}^n f)=\begin{cases}
\displaystyle-\frac{(n+1)\log(2\pi)}{2}+\sum_{j=1}^{(n+1)/2}\frac{\Gamma'}{\Gamma}(s+(j-1/2)(k-1))&\mbox{if $n$ is odd},\\
\displaystyle-\frac{(n+1)\log(2\pi)}{2}+\frac{1}{2}\frac{\Gamma'}{\Gamma}((s+r)/2)+\sum_{j=1}^{n/2}\frac{\Gamma'}{\Gamma}(s+j(k-1))&\mbox{if $n$ is even}.
\end{cases}
\end{equation}}%

Under Conjecture \ref{automorphy-GRH}, the function
\[
\Lambda(s,\mathrm{Sym}^n f)=N^{ns/2}\gamma(s,\mathrm{Sym}^n f)L(s,\mathrm{Sym}^n f)
\]
is an entire function of order 1 for all $n\geq1$.  By the Hadamard factorization theorem, there exist constants $a_{\mathrm{Sym}^n f}$ and $b_{\mathrm{Sym}^n f}$ such that
\[
\Lambda(s,\mathrm{Sym}^n f)=e^{a_{\mathrm{Sym}^n f}+b_{\mathrm{Sym}^n f}s}\prod_{\rho\neq0,1}\left(1-\frac{s}{\rho}\right)e^{s/\rho},
\]
where $\rho$ ranges over the zeros of $\Lambda(s,\mathrm{Sym}^n f)$ different from 0 and 1.  Taking the logarithmic derivative of both sides, we obtain the identity
\begin{equation}
\label{hadamard}
-\frac{L'}{L}(s,\mathrm{Sym}^n f)=\frac{n}{2}\log N+\frac{\gamma'}{\gamma}(s,\mathrm{Sym}^n f)-b_{\mathrm{Sym}^n f}-\sum_{\rho\neq0,1}\left(\frac{1}{s-\rho}+\frac{1}{\rho}\right).
\end{equation}
By \cite[Proposition 5.7]{IK}, we have that
\begin{equation}
\label{bsym}
\re(b_{\mathrm{Sym}^n f})=-\sum_{\rho}\re\left(\frac{1}{\rho}\right).
\end{equation}

\subsection{Galois representations and congruences}

We will require the theory of Galois representations attached to $f$ to show that there are few small primes $p$ for which $a_f(p) = 0$. Suppose that $f$ has weight $k$, level $N$, and the Fourier coefficients of $f$ are rational integers.  For every prime $\ell$ there is a representation $\rho : {\rm Gal}(\overline{\mathbb{Q}}/\mathbb{Q}) \to \mathrm{GL}_{2}(\mathbb{F}_{\ell})$ with the property that the fixed field of $\ker \rho$ is ramified only at primes $p$ dividing $N\ell$.  If $p \nmid N\ell$, then
\[
  {\rm tr}~\rho({\rm Frob}_{p}) \equiv a_f(p) \pmod{\ell}, \text{ and }
  {\rm det}~\rho({\rm Frob}_{p}) \equiv p^{k-1} \pmod{\ell}.
\]
Let $\tilde{\rho}$ be the composition ${\rm Gal}(\overline{\mathbb{Q}}/\mathbb{Q})
\to \mathrm{GL}_{2}(\mathbb{F}_{\ell}) \to \mathrm{PGL}_{2}(\mathbb{F}_{\ell})$. In \cite[Chapter 7]{Bosman}, Johan Bosman explicitly computes the fixed fields of $\ker \tilde{\rho}$
for level $1$ newforms of weights $12$ through $22$ and $\ell \leq 23$. These
are given as polynomials of degree $\ell+1$ whose splitting field is the corresponding extension (with Galois group a subgroup of $\mathrm{PGL}_{2}(\mathbb{F}_{\ell})$). These
computations were extended by Mascot \cite{Mascot}, who computed polynomials
that allow recovery of $\tau(p) \bmod \ell$ for primes $\ell$ up to $31$.

A straightforward calculation \cite[Lemma 7.5.1]{Bosman} shows that
we have ${\rm tr}~\rho(M) \equiv 0 \pmod{\ell}$ if and only if
$\tilde{\rho}(M)$ has order $2$. It follows that we may determine whether
or not $a_f(p) \equiv 0 \pmod{\ell}$ from the factorization of a polynomial
defining the fixed field of $\ker \tilde{\rho}$ in $\mathbb{F}_{p}[x]$.

More information can be obtained when the representations are reducible. We
will use the following results to prove congruences for level $1$ newforms
modulo powers of small primes. As usual, for an even integer $k$ let
\[
  E_{k}(z) = 1 - \frac{2k}{B_{k}} \sum_{n=1}^{\infty} \sigma_{k-1}(n) q^{n}
\]
denote the usual weight $k$ Eisenstein series. For $k \geq 4$ we have
$E_{k}(z) \in M_{k}(\Gamma_{0}(1))$.

\begin{lemma}
\label{Eisensteinlem}
If $p$ is a prime and $r \geq 1$, then $E_{p^{r-1} (p-1)} \equiv 1 \pmod{p^{r}}$.
\end{lemma}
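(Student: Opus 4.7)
The plan is a direct $p$-adic valuation computation. Writing $k = p^{r-1}(p-1)$ and using the given expansion
\[
E_k(z) - 1 = -\frac{2k}{B_k}\sum_{n=1}^\infty \sigma_{k-1}(n)q^n,
\]
I would observe that each $\sigma_{k-1}(n)$ is a positive integer, so its $p$-adic valuation is nonnegative. It therefore suffices to prove the single inequality $v_p(2k/B_k) \geq r$.

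For this I would first note that $v_p(k) = v_p(p^{r-1}(p-1)) = r - 1$, which is immediate from $\gcd(p,p-1) = 1$. Next, I would invoke the von Staudt--Clausen theorem, which says that for any even positive integer $m$, the quantity $B_m + \sum_{(q-1)\mid m} 1/q$ is an integer, where the sum runs over primes $q$ with $(q-1)\mid m$. Since $(p-1) \mid k$ by construction, the term $-1/p$ appears in that sum, while every other summand has trivial $p$-adic valuation; consequently $v_p(B_k) = -1$. Combining,
\[
v_p\!\left(\frac{2k}{B_k}\right) = v_p(2) + (r-1) - (-1) = r + v_p(2) \geq r,
\]
which yields the claim (with an extra factor of $p$ to spare in the case $p = 2$).

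There is no serious obstacle beyond identifying the right ingredient: von Staudt--Clausen is exactly what controls $v_p(B_k)$ when $(p-1)\mid k$, and once one sees this the rest is arithmetic. The only mild subtlety is in the smallest cases (for instance $p=2$, $r=1$ forces $k=1$, where $E_k$ is not a classical modular form), but the lemma is a formal congruence of $q$-expansions and the valuation inequality handles these cases uniformly.
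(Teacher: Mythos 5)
Correct, and essentially the same approach as the paper: the paper's proof is a one-line citation to Theorem~3 of Ireland--Rosen (p.~233), which is precisely the von Staudt--Clausen theorem you invoke, and you have simply spelled out the $p$-adic valuation computation that the paper deems immediate.
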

\begin{proof}
This follows immediately from the formula for $B_{k}$ given in
\cite[Theorem 3, pg. 233]{IrelandRosen}.\end{proof}

Define Ramanujan's $\theta$-operator by $\theta(f) = \frac{1}{2 \pi i} \frac{d}{dz} f(z)$, and define $E_{2,p^r}(z)=E_2(z)-p^r E_2(p^r z)$. For a prime $p$ and a positive integer $n$, let
${\rm ord}_{p}(n)$ be the highest power of $p$ that divides $n$.

\begin{lemma}
\label{Primepower}
If $f \in M_{k}(\Gamma_{0}(N))$ and $p^{r} | N$, then there is a form
$g \in M_{k+2}(\Gamma_{0}(N))$ with
\[
  \theta(f) \equiv g \pmod{p^{r - {\rm ord}_{p}(12)}}.
\]
\end{lemma}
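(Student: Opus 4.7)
The plan is to exploit the classical Serre--Ramanujan relation
\[
h := 12\,\theta(f) - k\, E_2\, f \;\in\; M_{k+2}(\Gamma_0(N)),
\]
which holds even though $E_2$ is only quasi-modular: the non-modular defect of $\theta(f)$ under the weight $k+2$ slash action is cancelled precisely by $\tfrac{k}{12}$ times the analogous defect of $E_2 f$. This would be verified by combining the standard transformation law $E_2|_2\gamma = E_2 - \tfrac{6ic}{\pi(cz+d)}$ with the identity $\theta(f)|_{k+2}\gamma = \theta(f) + \tfrac{kc}{2\pi i(cz+d)} f$ obtained by differentiating the automorphy relation for $f$, and checking that the two error terms cancel in $12\theta(f) - kE_2f$ for every $\gamma\in\Gamma_0(N)$.

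Next I would invoke the hypothesis $p^r\mid N$ through the paper's auxiliary form $E_{2,p^r}(z) = E_2(z) - p^r E_2(p^r z)$, which is a genuine element of $M_2(\Gamma_0(p^r)) \subseteq M_2(\Gamma_0(N))$. Substituting $E_2 = E_{2,p^r} + p^r E_2(p^r z)$ into the Serre--Ramanujan identity produces
\[
12\,\theta(f) \;=\; \bigl(h + k\, E_{2,p^r}\, f\bigr) \;+\; k\, p^r\, E_2(p^r z)\, f.
\]
Setting $G := h + k\, E_{2,p^r}\, f$, the first piece lies in $M_{k+2}(\Gamma_0(N))$, while the second piece has $p$-integral $q$-expansion (since $E_2$ has integer coefficients) and is manifestly divisible by $p^r$. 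Thus, as formal $q$-expansions,
\[
12\,\theta(f) \;\equiv\; G \pmod{p^r}.
\]

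To finish, I would take $g := G/12 \in M_{k+2}(\Gamma_0(N))$. Dividing the congruence above by $12$ and using $\tfrac{p^r}{12}\mathbb{Z}_{(p)} = p^{r-\mathrm{ord}_p(12)}\mathbb{Z}_{(p)}$ yields $\theta(f) \equiv g \pmod{p^{r-\mathrm{ord}_p(12)}}$, the desired conclusion; the loss of $\mathrm{ord}_p(12)$ accounts exactly for the $p$-part of the denominator picked up when inverting $12$. The only real obstacle is bookkeeping rather than conceptual: verifying the Serre--Ramanujan identity on $\Gamma_0(N)$ via the transformation calculation sketched above, and confirming that the residual term $k p^r E_2(p^r z) f$ indeed contributes nothing modulo $p^r$; everything else is a clean separation of the quasi-modular $E_2$ into its $\Gamma_0(p^r)$-modular part $E_{2,p^r}$ and a tail divisible by $p^r$.
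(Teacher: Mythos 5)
Your proof is correct and is essentially the paper's argument: the paper also invokes the Ramanujan--Serre identity $\theta(f) - \tfrac{k}{12}E_2 f \in M_{k+2}(\Gamma_0(N))$ (Proposition 2.11 of Ono's book) and then adds back $\tfrac{k}{12}E_{2,p^r}f$, which amounts to exactly your decomposition $E_2 = E_{2,p^r} + p^r E_2(p^r z)$; you merely clear the denominator of $12$ first and reintroduce it at the end. The only cosmetic difference is that you sketch a direct verification of the quasi-modularity cancellation while the paper cites it as a known result.
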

\begin{proof}
Proposition 2.11 of \cite{OnoBook} states that $h = \theta(f) - \frac{k E_{2} f}{12} \in M_{k+2}(\Gamma_{0}(N))$. Now, let
\[
  g(z) = h(z) + \frac{k E_{2,p^{r}}(z) f(z)}{12}.
\]
Then $g \equiv \theta(f) \pmod{p^{r - {\rm ord}_{p}(12)}}$ and $g(z) \in
M_{k+2}(\Gamma_{0}(N))$.
\end{proof}

The Kummer congruences \cite[Theorem 5, pg. 239]{IrelandRosen} imply
that $E_{2} \equiv E_{p+1} \pmod{p}$; combining this with
\cite[Proposition 2.11]{OnoBook}, we see that if $f \in M_{k}(\Gamma_{0}(1))$, then
there is a $g \in M_{k+p+1}(\Gamma_{0}(1))$ with $\theta(f) \equiv g \pmod{p}$.
Finally, to prove congruences between two different modular forms, we will
use the following well-known theorem of Sturm \cite{Sturm}.
\begin{theorem}
\label{Sturmtheorem}
Let $f(z) = \sum_{n=0}^{\infty} \in M_{k}(\Gamma_{0}(N))$ with $a_f(n) \in \mathbb{Z}$ for all $n$. If $a_f(n) \equiv 0 \pmod{M}$ for $n \leq \frac{k}{12} [{\rm SL}_{2}(\mathbb{Z}) : \Gamma_{0}(N)]$, then $a_f(n) \equiv 0 \pmod{M}$ for all $n \geq 0$.
\end{theorem}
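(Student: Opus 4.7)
The plan is to reduce the statement to the case of prime modulus $M=p$ by a simple $p$-adic valuation argument, and then to establish the mod-$p$ case using the fact that a nonzero modular form of weight $k$ on $\Gamma_0(N)$ cannot vanish at the cusp $\infty$ beyond order $B:=\frac{k}{12}[\mathrm{SL}_2(\Z):\Gamma_0(N)]$.

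For the reduction, it suffices to prove that $p^{v_p(M)}\mid a(n)$ for every $n\ge 0$ and every prime $p\mid M$. Fix such a $p$ and let $e$ be the largest integer with $p^e\mid a(n)$ for every $n\ge 0$; I want $e\ge v_p(M)$. Suppose instead that $e<v_p(M)$, and set $g=p^{-e}f\in M_k(\Gamma_0(N))$. Then $g$ has integer Fourier coefficients $b(n)=a(n)/p^e$, at least one of which is coprime to $p$, while the hypothesis $p^{v_p(M)}\mid a(n)$ for $n\le B$ forces $b(n)\equiv 0\pmod p$ for such $n$. We are therefore reduced to proving that any $g\in M_k(\Gamma_0(N))\cap\Z[[q]]$ with $b(n)\equiv 0\pmod p$ for $n\le B$ must satisfy $g\equiv 0\pmod p$, since this contradicts the maximality of $e$.

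To prove this mod-$p$ statement, I would pass to an integral model of the compactified modular curve $X_0(N)$ over $\Z_{(p)}$ and regard $g$ as a section of the Hodge line bundle $\omega^{\otimes k}$. If $g\not\equiv 0\pmod p$, its reduction $\bar g$ is a nonzero global section of $\omega^{\otimes k}$ on $X_0(N)_{\F_p}$, and the total degree of its divisor equals $\deg(\omega^{\otimes k})=B$. The hypothesis says that $\bar g$ vanishes at the cusp $\infty\in X_0(N)(\F_p)$ to order at least $B+1$, which is the desired contradiction. An equivalent classical formulation instead applies the valence formula for $\mathrm{SL}_2(\Z)$ to the norm $F=\prod_\gamma g|_k\gamma\in M_{kv}(\mathrm{SL}_2(\Z))$, where $\gamma$ ranges over right coset representatives of $\Gamma_0(N)\backslash\mathrm{SL}_2(\Z)$ and $v=[\mathrm{SL}_2(\Z):\Gamma_0(N)]$; the formula bounds $v_\infty(F)$ by $kv/12=B$, and since $g$ itself is the factor indexed by the identity coset, $v_\infty(g)\le v_\infty(F)\le B$.

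The main obstacle is the geometric step in bad characteristic, i.e.\ when $p\mid N$, since then $X_0(N)_{\F_p}$ is not smooth. For the present paper $N$ is squarefree, so by Deligne--Rapoport the reduction is semistable and the computation of $\deg(\omega^{\otimes k})$ carries over provided one interprets vanishing at cusps correctly. The norm-product formulation faces a parallel technical issue in controlling the $p$-adic valuations of the $q$-expansions of $g|_k\gamma$ at cusps other than $\infty$, so either approach requires some care with integrality at the non-identity cusps before the bound on $v_\infty(g)$ yields the contradiction.
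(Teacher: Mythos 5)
The paper itself does not prove Theorem~\ref{Sturmtheorem}; it simply states the result and cites Sturm, so there is no in-paper argument to compare against. On its own terms, your reduction from a composite modulus $M$ to a single prime $p$ via the maximal exponent $e$ with $p^e\mid a(n)$ for all $n$ is clean and correct, and you have correctly isolated the mod-$p$ vanishing bound $v_\infty(\bar g)\le B=\frac{k}{12}[\mathrm{SL}_2(\Z):\Gamma_0(N)]$ as the crux of the matter.

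The gap is in how you propose to close that mod-$p$ step. You argue that the bad-characteristic difficulty is manageable here because ``for the present paper $N$ is squarefree,'' but this misreads what the paper needs: the squarefree hypothesis concerns the level of the newform $f$ in Theorems~\ref{main-theorem}--\ref{count-primes-zero}, whereas the applications of Sturm's theorem in Section~5 are to forms in $M_{16}(\Gamma_0(64))$ modulo $2^{13}$, in $M_{4390}(\Gamma_0(3^9))$ modulo $3^8$, and in $M_{36}(\Gamma_0(25))$ modulo $5^2$, i.e.\ with $p^2\mid N$ at exactly the primes $p$ where the congruence is being proved. So the Deligne--Rapoport semistable shortcut you invoke is unavailable in the cases the paper actually uses, and you fall back on the norm-product argument, for which you flag but do not resolve the controlling issue: the $q$-expansions of $g|_k\gamma$ at non-identity cosets have denominators, and one must show the (suitably normalized) product is $p$-integral with nonzero reduction before $v_\infty(g)\le v_\infty(F)\le kv/12$ can be concluded mod $p$. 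This is not a cosmetic point --- it is the substance of Sturm's original proof --- so as written the argument stops short of proving the statement in the generality the paper relies on.
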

To isolate specific residue classes, we will use \cite[Proposition 2.8]{OnoBook}.
\begin{theorem}
\label{twisting}
Suppose that $f(z) = \sum_{n=0}^{\infty} a_f(n) q^{n} \in M_{k}(\Gamma_{0}(N))$
and $\psi$ is a quadratic Dirichlet character with modulus $m$. Then
\[
  \sum_{n=0}^{\infty} a_f(n) \psi(n) q^{n} \in M_{k}(\Gamma_{0}(Nm^{2})).
\]
\end{theorem}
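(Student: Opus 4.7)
The plan is to exhibit the twist as an explicit $\C$-linear combination of additive translates $f(z+b/m)$ for $b=0,1,\ldots,m-1$, and then verify that this combination transforms as a weight-$k$ modular form on $\Gamma_0(Nm^2)$ with trivial character. Orthogonality of additive characters modulo $m$ gives
\[
\sum_{n\equiv a\,(m)}a(n)q^n \;=\; \frac{1}{m}\sum_{b=0}^{m-1} e^{-2\pi i ab/m}f(z+b/m),
\]
and summing this over $a\bmod m$ weighted by $\psi(a)$ yields
\[
f_\psi(z)\;:=\;\sum_{n=0}^\infty a(n)\psi(n)q^n \;=\; \frac{1}{m}\sum_{a,b}\psi(a)e^{-2\pi i ab/m}f(z+b/m).
\]

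The heart of the argument is a matrix factorization. Given $\gamma = \twobytwo{\alpha}{\beta}{c}{\delta}\in \Gamma_0(Nm^2)$ and an integer $b$, set $b^* \equiv b\delta^2 \pmod m$ and $T_x:=\twobytwo{1}{x/m}{0}{1}$. I claim $\gamma_b := T_b\gamma T_{b^*}^{-1}$ lies in $\Gamma_0(N)$: a direct calculation shows it has integer entries (using $m^2\mid c$ to make the cross-term $cbb^*/m^2$ integral, and $\alpha\delta\equiv 1\pmod{m^2}$, which follows from $\det\gamma = 1$ and $Nm^2\mid c$, to make the remaining off-diagonal entries integral), determinant $1$, and lower-left entry $c$ divisible by $N$. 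Applying the modularity of $f$ under $\gamma_b$ then gives the key identity
\[
f(\gamma z + b/m) \;=\; (cz+\delta)^k\, f(z + b^*/m).
\]

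Substituting this into $f_\psi(\gamma z)$ and reindexing via the bijections $b \leftrightarrow b^*$ and $a \leftrightarrow a^*:=a\alpha^2\bmod m$ — noting that $\psi(a)=\psi(a^*\delta^2)=\psi(a^*)\psi^2(\delta)$ and $(\alpha\delta)^2\equiv 1 \pmod m$ collapses the exponent back to $-2\pi i a^*b^*/m$ — yields
\[
f_\psi(\gamma z) \;=\; (cz+\delta)^k\,\psi^2(\delta)\, f_\psi(z).
\]
Since $\psi$ is quadratic, $\psi^2 = 1$, so this is precisely the weight-$k$ transformation of a form in $M_k(\Gamma_0(Nm^2))$ with trivial nebentypus. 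Holomorphy at $\infty$ is manifest from the $q$-expansion $\sum a(n)\psi(n)q^n$, and at all other cusps follows by conjugating the argument by representatives in $\mathrm{SL}_2(\Z)$ and invoking the holomorphy of $f$ at the cusps of $\Gamma_0(N)$. The main obstacle is the matrix factorization above, which is exactly where the factor of $m^2$ in the level enters; the rest is routine bookkeeping in a finite sum.
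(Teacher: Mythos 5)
The paper does not prove this statement; it simply cites it as Proposition~2.8 of Ono's \emph{Web of Modularity} (reference \cite{OnoBook}). Your argument is the standard proof of the twist theorem and it is correct: express $f_\psi$ as $\frac{1}{m}\sum_{a,b}\psi(a)e^{-2\pi i ab/m}f(z+b/m)$ via orthogonality of additive characters, check the matrix identity $T_b\gamma T_{b^*}^{-1}\in\Gamma_0(N)$ (where $T_x=\twobytwo{1}{x/m}{0}{1}$, $b^*\equiv b\delta^2\bmod m$, and $m^2\mid c$ is exactly what makes $cbb^*/m^2$ integral and $\alpha\delta\equiv1\pmod{m^2}$), and reindex the double sum via $a\mapsto a\alpha^2$, $b\mapsto b\delta^2$, which multiplies the exponent by $(\alpha\delta)^2\equiv1\pmod m$ and produces the nebentypus $\psi^2(\delta)=1$. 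Your computations all check out: the cocycle collapses to $(cz+\delta)^k$ as claimed, $\gcd(\delta,m)=1$ guarantees both that the reindexing is a bijection and that $\psi(\delta)\neq0$, and holomorphy at the cusps follows because $f_\psi$ is a finite linear combination of $f|_k\sigma$ for $\sigma\in\GL_2^+(\Q)$. In fact your formulation is slightly more robust than some textbook versions in that it never assumes $\psi$ primitive (which would permit the cleaner Gauss-sum expression $f_\psi=g(\overline\psi)^{-1}\sum_b\overline\psi(b)f(z+b/m)$): the double-sum version you use is the one that applies verbatim to an imprimitive quadratic character, which matches the generality of the stated theorem.
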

Finally, define the $U(d)$ and $V(d)$ operators by
\begin{align*}
  \left( \sum_{n=0}^{\infty} a_f(n) q^{n} \right) | U(d) &=
  \sum_{n=0}^{\infty} a(dn) q^{n}\\
  \left( \sum_{n=0}^{\infty} a_f(n) q^{n} \right) | V(d) &=
  \sum_{n=0}^{\infty} a_f(n) q^{dn}.
\end{align*}
Proposition 2.22 of \cite{OnoBook} implies that if
$f(z) \in M_{k}(\Gamma_{0}(N))$, then
$f(z) | V(d) \in M_{k}(\Gamma_{0}(dN))$. It follows from \cite[Section 4.1.3]{Kil}
that if $f(z) \in M_{k}(\Gamma_{0}(N))$ and $p$ is a prime divisor
of $N$, then $f(z) | U(p) \in M_{k}(\Gamma_{0}(N))$. Moreover,
if $p^{2} | N$, then $f(z) | U(p) \in M_{k}(\Gamma_{0}(N/p))$ \cite[Lemma 7]{atkin-lehner}.

\section{Proofs of Theorems \ref{main-theorem} and \ref{count-primes-zero}}
\label{sec:proofs_of_main_theorems}

To estimate $\pi_{f,I}(x)$, we use the following approximation for the indicator function of an interval.

\begin{lemma}
\label{BS}
Let $I=[\alpha,\beta]\subset[0,\pi]$ be a subinterval, and let $M$ be a positive integer.  There exist trigonometric polynomials
\[
F_{I,M}^{\pm}(\theta)=\sum_{n=0}^M\hat{F}_{I,M}^{\pm}(n)U_n(\cos \theta)
\]
that satisfy the following properties:
\begin{enumerate}
\item For all $0\leq\theta\leq\pi$, we have
\[
F_{I,M}^{-}(\theta)\leq\chi_I(\theta)\leq F_{I,M}^{+}(\theta).
\]
\item We have
\[
|\hat{F}_{I,M}^{\pm}(0)-\mu_{ST}(I)|\leq \frac{4}{M+1}.
\]
\item For all $1\leq n\leq M$, we have
\[
|\hat{F}_{I,M}^{\pm}(n)|\leq4\left(\frac{1}{M+1}+\min\left\{\frac{\beta-\alpha}{2\pi},\frac{1}{\pi n}\right\}\right).
\]
\end{enumerate}
\end{lemma}
\begin{proof}
We begin with some notation.  Let $e(t)=e^{2\pi i t}$, and let $J=[\frac{\alpha}{2\pi},\frac{\beta}{2\pi}]$.  The Fourier expansion of the indicator function $\chi_J(\theta)$ is given by
\[
\chi_J(\theta)=\sum_{n\in\mathbb{Z}}\hat{\chi}_J(n)e(n\theta),\qquad \hat{\chi}_J(n)=\int_{J}e(-nt)dt.
\]
We note that $\hat{\chi}_J(0)=\frac{\beta-\alpha}{2\pi},$
\[
|\hat{\chi}_J(n)|\leq\min\left\{\frac{\beta-\alpha}{2\pi},\frac{1}{\pi|n|}\right\}
\]
when $n\neq0$, and
\[
\mu_{ST}(I)=2\re(\hat{\chi}_J(0)-\hat{\chi}_J(2)).
\]

Let $M$ be a positive integer.  In \cite[Chapter 1, Section 2]{Montgomery}, it is proven that there exist trigonometric polynomials
\[
S_{J,M}^{\pm}(\theta)=\sum_{|n|\leq M}\hat{S}_{J,M}^{\pm}(n)e(n\theta)
\]
that satisfy the following three properties:
\begin{enumerate}
\item For all $\theta\in[0,1]$, we have $S_{J,M}^{-}(\theta)\leq\chi_{J}(\theta)\leq S_{J,M}^{+}(\theta).$
\item For all $n$, we have $S_{J,M}^{\pm}(n)+S_{J,M}^{\pm}(-n)=2\re(S_{J,M}^{\pm}(n))$.
\item For all $0\leq|n|\leq M$, we have
\[
|\hat{S}_{J,M}^{\pm}(n)-\hat{\chi}_{J}(n)|\leq \frac{1}{M+1}.
\]
\end{enumerate}
Now, define
\[
F_{I,M}^{\pm}(\theta)=S_{J,M}^{\pm}\left(\frac{\theta}{2\pi}\right)+S_{J,M}^{\pm}\left(-\frac{\theta}{2\pi}\right).
\]
We want express $F_{I,M}^{\pm}(\theta)$ in terms of the orthonormal basis for $L^2([0,\pi],\mu_{ST})$ given by Chebyshev polynomials of the second kind $\{U_n(\cos \theta)\}_{n=0}^\infty$.  It is well known that
\[
U_n(\cos\theta)=\frac{\sin((n+1)\theta)}{\sin\theta}.
\]
We have $1=U_0(\cos \theta),$ $\cos \theta=\frac{1}{2}U_1(\cos \theta)$, and
\[
\cos(n\theta)=\frac{1}{2}(U_n(\cos \theta)-U_{n-2}(\cos \theta)),\qquad n\geq2.
\]
Therefore, we have
\[
F_{I,M}^{\pm}(\theta)=2\sum_{n=0}^M \re(\hat{S}_{J,M}^{\pm}(n)-\hat{S}_{J,M}^{\pm}(n+2))U_n(\cos \theta),
\]
where $\hat{S}_{J,M}^{\pm}(n)=0$ if $n>M$.  The lemma now follows.
\end{proof}

Assuming the Riemann Hypothesis, Schoenfeld \cite{SchoenfeldGRH} proved that if $x\geq 2657$, then
\[
|\pi(x)-\mathrm{Li}(x)|\leq\frac{1}{8\pi}\sqrt{x}\log x.
\]
To bound $\pi_{f,I}(x)$, we use the following variant of the Erd\H{o}s-Tur\'an inequality \cite[Chapter 1, Theorem 1]{Montgomery}, which is a straightforward consequence of Lemma \ref{BS} and Schoenfeld's inequality.

\begin{lemma}
\label{ET}
Let $I=[\alpha,\beta]\subset[0,\pi]$ be a subinterval, and let $M\geq2$.  If the Riemann Hypothesis is true, then for all $x\geq 2657$, we have
\begin{align*}
|\pi_{f,I}(x)-\mu_{ST}(I)\mathrm{Li}(x)|&\leq\frac{4}{M+1}\left(\mathrm{Li}(x)+\frac{1}{8\pi}\sqrt{x}\log x\right)+\frac{1}{8\pi}\sqrt{x}\log x\\
&+4\sum_{n\leq M}\left(\frac{1}{M+1}+\frac{1}{\pi n}\right)\left|\sum_{p\leq x}U_n(\cos \theta_p )\right|.
\end{align*}
\end{lemma}

Theorem \ref{main-theorem} now follows from the next proposition, whose proof we defer to Section \ref{proof-ab}.  Recall that $\mathfrak{q}(f)=N(k-1)$.
\begin{proposition}
\label{key-estimates-ab}
Assume the hypotheses of Theorem \ref{main-theorem}.  If $n\geq1$ and $x\geq5\times10^5$, then
\begin{align*}
\left|\sum_{p\leq x}U_n(\cos \theta_p )\right|&\leq n\log n\left(\frac{5\sqrt{x}}{12}+\frac{7\sqrt{x}}{2\log x}\right)+n\left(\frac{8}{75}\sqrt{x}\log x + 7\sqrt{x}\log\mathfrak{q}(f)\right)\\
&+15\sqrt{x}\log n+4(\log\mathfrak{q}(f))\sqrt{x}\log x.
\end{align*}
\end{proposition}

Assuming the truth of Proposition \ref{key-estimates-ab}, we prove Theorem \ref{main-theorem}.
\begin{proof}[Proof of Theorem \ref{main-theorem}]
Choose $M=2.40753x^{1/4}(\log x)^{-1}$.  Since $M\geq2$ when $x\geq 5\times 10^5$, Lemma \ref{ET} holds.  With this choice of $M$, we use Proposition \ref{key-estimates-ab} to estimate the right hand side of Lemma \ref{ET} and obtain the claimed result when $x\geq 5\times 10^5$.  Noting that the error term is greater than $\pi(x)$ for $2\leq x<5\times10^5$, Theorem \ref{main-theorem} now holds for all $x\geq 2$.
\end{proof}

To prove Theorem \ref{count-primes-zero}, we sharpen our estimates by weighing the contribution from each prime by a test function which is a pointwise upper bound for the indicator function of the interval $[x,2x]$.  If we let
\[
g(y)=\begin{cases}
\displaystyle\exp\left(\frac{4}{3}+\frac{1}{(y-\frac{1}{2})(y-\frac{5}{2})}\right)&\mbox{if $\displaystyle\frac{1}{2}<y<\frac{5}{2}$,}\\
0&\mbox{otherwise}
\end{cases}
\]
and $g_x(y)=g(y/x)$, then $g_x(y)$ is such a test function.

For $M\geq 2$, define $I_M=[\frac{\pi}{2}(1-\frac{1}{2M}),\frac{\pi}{2}(1+\frac{1}{2M})]$; for this choice of interval, we have $\mu_{ST}(I_M)\leq\frac{1}{M}$.  We use the upper bound for $\chi_I(\theta)$ given by $F_{I,M}^+(\theta)$ in Lemma \ref{BS}.  By our choice of $I$, $F_{I,M}^{+}(\theta)$ is symmetric across the vertical line $\theta=\frac{\pi}{2}$.  Thus if $n$ is odd, then $\hat{F}_{I,M}^{+}(n)$ vanishes.  Using the bound $1\leq\frac{\log p}{\log x}$ for all $x<p\leq 2x$, we immediately obtain the following lemma.
\begin{lemma}
\label{BS-upper}
If $M$ is a positive integer and $x\geq2$, then
\[
\pi_{f,I_M}(2x)-\pi_{f,I_M}(x)\leq \frac{5}{M\log x}\sum_{n\leq M/2}\left|\sum_{p}U_{2n}(\cos \theta_p )g_x(p)\log p\right|.
\]
\end{lemma}

Theorem \ref{count-primes-zero} now follows from the next proposition, whose proof we defer to Section \ref{proof-ab}.
\begin{proposition}
\label{key-estimates-zero}
Assume the above notation.  Assume that the symmetric power $L$-functions of $f$ are automorphic and satisfy the Generalized Riemann Hypothesis.  If $n\geq0$ and $x\geq1.4\times10^7$, then
\[
\left|\sum_{p}U_n(\cos \theta_p )g_x(p)\log p\right|
\]
is bounded above by
\begin{align*}
1.6844 \delta_{n,0}x+\sqrt{x}&[8.1736n\log n+(21.038+8.188\log\mathfrak{q}(f))n\\
&+57.22\log n+134.01+8.3\log\mathfrak{q}(f)],
\end{align*}
where $\delta_{n,0}=1$ if $n=0$ and $0$ otherwise.
\end{proposition}

Assuming the truth of Proposition \ref{key-estimates-zero}, we prove Theorem \ref{count-primes-zero}.
\begin{proof}[Proof of Theorem \ref{count-primes-zero}]
Choose $M=1.8159x^{1/4}(\log x)^{-1/2}$.  Since $M\geq2$ when $x\geq 1.4\times10^7$, Lemma \ref{ET} holds.  With this choice of $M$, we use Proposition \ref{key-estimates-zero} to estimate the right hand side of Lemma \ref{BS-upper} and obtain the claimed result when $x\geq 1.4\times10^7$.  Noting that the error term is greater than $\pi(x)$ for $2\leq x<1.4\times10^7$, Theorem \ref{count-primes-zero} now holds for all $x\geq 2$.
\end{proof}

\section{The Mellin Transform}
\label{sec:mellin}

Let $ L(s,\mathrm{Sym}^n f) $ be the $n$-th symmetric power $L$-function associated to a newform $f$ satisfying Conjecture \ref{automorphy-GRH}.  As shown in the previous section, the conclusion of Theorem \ref{main-theorem} will follow directly from Proposition \ref{key-estimates-ab}, which is a completely explicit version of the prime number theorem for $L(s, \mathrm{Sym}^n f )$ for each $n\geq1$.  Define
\begin{equation}
\label{psi-def}
\psi_{ \mathrm{Sym}^n f }(x)=\sum_{j\leq x}\Lambda_{ \mathrm{Sym}^n f }(j).
\end{equation}
Let $\sigma_0=1+\frac{1}{\log(x)}$.  A standard application of Mellin inversion gives us
\[
\psi_{ \mathrm{Sym}^n f }(x)=-\frac{1}{2\pi i}\int_{\sigma_0-i\infty}^{\sigma_0+i\infty}\frac{L'}{L}(s, \mathrm{Sym}^n f )\frac{x^s}{s}~ds+\frac{1}{2}\Lambda_{ \mathrm{Sym}^n f }(x).
\]
Since the above integral does not converge absolutely, we approximate $\psi_{ \mathrm{Sym}^n f }(x)$ with a truncated integral.
\begin{lemma}
\label{psi-estimate}
If $2\leq T\leq x$, then
\begin{align*}
&\left|\psi_{ \mathrm{Sym}^n f }(x)-\left(-\frac{1}{2\pi i}\int_{\sigma_0-iT}^{\sigma_0+iT}\frac{L'}{L}(s, \mathrm{Sym}^n f )\frac{x^s}{s}~ds\right)\right|\\
\leq& (n+1)\left(\frac{6.343 x(\log x)^2}{T}+\frac{12.24 x\log x}{T}+\frac{\log x}{T}+7.8\log x\right)
\end{align*}
\end{lemma}
\begin{proof}
Since $|\Lambda_{ \mathrm{Sym}^n f }(j)|\leq(n+1)\Lambda(j)$, it follows from the arguments in \cite[Chapter 17]{Davenport} that the quantity in the statement of the lemma is bounded by
\begin{align*}
\sum_{j=1}^\infty&|\Lambda_{ \mathrm{Sym}^n f }(j)|(x/j)^{\sigma_0}\min\{1,|T\log(x/j)|^{-1}\}+\sigma_0 T^{-1}\Lambda_{ \mathrm{Sym}^n f }(x)\\
&\leq (n+1)\left(ex\sum_{j=1}^\infty\Lambda(j)j^{-\sigma_0}\min\{1,|T\log(x/j)|^{-1}\}+T^{-1}\log(x)+1\right).
\end{align*}
If $j\notin(\frac{3x}{4},\frac{5x}{4})$, then $|\log(x/j)|^{-1}\leq\frac{9}{2},$ so
\[
\sum_{j\notin(\frac{3x}{4},\frac{5x}{4})}\Lambda(j)j^{-\sigma_0}\min\{1,|T\log(x/j)|^{-1}\}\leq\frac{9}{2T}\sum_{j\notin(\frac{3x}{4},\frac{5x}{4})}\Lambda(j)j^{-\sigma_0}\leq\frac{9\log(x)}{2T}.
\]
Let $x_1$ be the largest prime power in the range $(\frac{3x}{4},x)$.  If $j=x_1$, then $\frac{1}{|\log(x/j)|}\leq\frac{x}{x-x_1}.$  Thus the contribution arising from $x_1$ is $\Lambda(x_1)x_1^{-\sigma_0}\leq\frac{4\log(x)}{3x}.$

If $j\in(\frac{3x}{4},x_1)$, then $\frac{1}{|\log(x/j)|}\leq\frac{x_1}{x_1-j}.$ Thus the contribution from these values of $j$ is bounded by
{\small\[
\sum_{j\in(\frac{3x}{4},x_1)}\Lambda(j)j^{-\sigma_0}\frac{x_1}{T(x_1-j)}\leq \frac{x_1}{T}\sum_{j\in(0,\frac{x}{4})}\frac{\Lambda(x_1-j)}{(x_1-j)^{\sigma_0} j}\leq \frac{x_1}{T}\sum_{j\in(0,\frac{x}{4})}\frac{\log(x+j)}{(\frac{3x}{4}-j)j}\leq\frac{4\log(x)^2}{3T}.
\]}%

Let $x_2$ be the least prime power greater than $x$.  If $j=x_2$, then $\frac{1}{|\log(x/x_2)|}\leq\frac{x_2}{x_2-x}.$ Thus the contribution arising from $x_2$ is bounded by $\Lambda(x_2)x_2^{-\sigma_0}\leq\frac{\log(x)}{x}.$

If $j\in(x_2,\frac{5x}{4})$, then $\frac{1}{|\log(x/j)|}\leq\frac{j}{j-x_2}.$  Thus the contribution from these values of $j$ is bounded by
\[
\sum_{j\in(x_2,\frac{5x}{4})}\Lambda(j)j^{-\sigma_0}\frac{j}{T(j-x_2)}\leq\frac{1}{T}\sum_{j\in(0,\frac{x}{4})}\frac{\Lambda(x_2+j)(x_2+j)}{(x_2+j)^{\sigma_0}j}\leq \frac{(\log x)^2}{T}.
\]
Collecting the above estimates yields the desired result.
\end{proof}

\section{The Density of Zeros of Symmetric Power $L$-functions}
\label{sec:density_of_zeros}

In order to estimate the integrals in the previous section, we must understand the distribution of the poles of $\frac{L'}{L}(s, \mathrm{Sym}^n f )$.  We first present some auxiliary results.  Throughout this section, we have $n\geq1$.
\begin{lemma}
\label{aux1}
If $s=\sigma+it$ and $\sigma>1$, then
\[
\left|\frac{L'}{L}(s, \mathrm{Sym}^n f )\right|\leq-(n+1)\frac{\zeta'}{\zeta}(\sigma).
\]
\end{lemma}
\begin{proof}
Since $|\Lambda_{ \mathrm{Sym}^n f }(j)|\leq(n+1)\Lambda(j)$, we have
\[
\left|-\frac{L'}{L}(s, \mathrm{Sym}^n f )\right|\leq\sum_{j=1}^\infty\left|\frac{\Lambda_{ \mathrm{Sym}^n f }(j)}{j^s}\right|\leq(n+1)\sum_{j=1}^\infty\frac{\Lambda(j)}{j^{\sigma}}\leq-(n+1)\frac{\zeta'}{\zeta}(\sigma).
\]
\end{proof}
\begin{lemma}
\label{digamma}
Let $\left<s\right>=\min\{|s+j|:j\in\mathbb{Z}\cap[0,\infty)\}$.  For all $s\in\mathbb{C}$, we have
\[
\left|\frac{\Gamma'}{\Gamma}(s)\right|\leq4+\frac{2}{\left<s\right>}+2\log(|s|+3).
\]
If $\re(s)>0$, then
\[
\re\left(\frac{\Gamma'}{\Gamma}(s)\right)\leq \frac{1}{\re(s)}+\log|s|.
\]
\end{lemma}
\begin{proof}
The second estimate follows from the proof of Lemma 4 of \cite{Ono}, which also shows that if $\re(s)\geq1$, then
\[
\left|\frac{\Gamma'}{\Gamma}(s)\right|\leq \frac{11}{3}+\log(|s|+1).
\]
We may now suppose that $\re(s)<1$.  For any integer $m\geq1$, we have the  identity
\[
\frac{\Gamma'}{\Gamma}(s)=\frac{\Gamma'}{\Gamma}(s+m)-\sum_{j=0}^{m-1}\frac{1}{s+j}=\frac{\Gamma'}{\Gamma}(s+m)-\sum_{j=0}^{m-1}\frac{1}{s+m-1-j}.
\]
We choose $m=2+\lfloor-\re(s)\rfloor$.  We then have $1\leq\re(s+m)\leq2$, so
\[
\left|\frac{\Gamma'}{\Gamma}(s+m)\right|\leq\frac{11}{3}+\log(|\re(s+m)+i\im(s)|+1)\leq \frac{11}{3}+\log(|\im(s)|+3).
\]
By our choice of $m$, we now have
\begin{align*}
\left|\frac{\Gamma'}{\Gamma}(s)\right|&\leq\left|\frac{\Gamma'}{\Gamma}(s+m)\right|+\sum_{j=0}^{m-1}\frac{1}{|s+m-1-j|}\\
&\leq\frac{11}{3}+\log(|\im(s)|+3)+\frac{2}{\left<s\right>}+\sum_{j=0}^{m-3}\frac{1}{|s+m-3-j|}\\
&\leq\frac{11}{3}+\log(|\im(s)|+3)+\frac{2}{\left<s\right>}+\sum_{j=0}^{m-3}\frac{1}{j+1}\\
&\leq \frac{7}{2}+\frac{2}{\left<s\right>}+2\log(|s|+3).
\end{align*}
The result now follows.
\end{proof}

\begin{lemma}
\label{log-deriv-gamma-factor}
If $\re(s)>-\frac{1}{2}$ and $|s|\geq\frac{1}{8}$, then
\begin{align*}
\left|\frac{\gamma'}{\gamma}(s, \mathrm{Sym}^n f )\right|&\leq (n+1) (\log (k-1)+\log (n+2 |s|+9)+8.5)\\
&+7 \log (n+2 |s|+9)+\log (|s|+7)+5.
\end{align*}
If $\re(s)\geq2$ and $\im(s)=T$, then
\[
\re\left(\frac{\gamma'}{\gamma}(s, \mathrm{Sym}^n f )\right)\leq \frac{n+1}{2} (\log (k-1)+\log (n+|T|+3)-1)+\frac{7}{2} \log (n+|T|+3).
\]
\end{lemma}
\begin{proof}
Under the above conditions on $s$, we have that $\left<s\right>\geq\frac{1}{8}$.  Thus the claimed results follow from Lemma \ref{digamma}, Stirling's bounds for the gamma function, and the shape of $\gamma(s, \mathrm{Sym}^n f )$.
\end{proof}

We now give the distribution of zeros of $L(s, \mathrm{Sym}^n f )$ in the critical strip.

\begin{lemma}
\label{zero-count}
For any $T$, let
\[
N_{ \mathrm{Sym}^n f }(T)=\#\{\rho=1/2+i\gamma:L(\rho, \mathrm{Sym}^n f )=0,|\gamma- T|\leq1\}.
\]
For a nonnegative integer $j$, let
\[
N_{ \mathrm{Sym}^n f }^*(j)=\#\{\rho=1/2+i\gamma:L(\rho, \mathrm{Sym}^n f )=0,j\leq|\gamma|\leq j+1\}.
\]
We have
\[
N_{ \mathrm{Sym}^n f }(T)\leq\frac{13}{12} (n+1) (\log \mathfrak{q}(f)+\log (n+|T|+3))+8\log (n+|T|+3).
\]
and
\[
N_{ \mathrm{Sym}^n f }^*(j)\leq \frac{5}{6} (n+1) (\log \mathfrak{q}(f)+\log (n+j+7/2))+6\log (n+j+7/2).
\]
\end{lemma}
\begin{proof}
Let $s_0=2+iT$.  By the arguments in \cite{Murty}, we have that
\begin{equation}
\label{eqn:sum_zeros}
\sum_{\rho}\re\left(\frac{1}{s_0-\rho}\right)=\frac{n}{2}\log N+\re\left(\frac{\gamma'}{\gamma}(s_{0}, \mathrm{Sym}^n f )\right)+\re\left(\frac{L'}{L}(s_{0}, \mathrm{Sym}^n f )\right),
\end{equation}
where the sum is over the nontrivial zeroes of $ L(s,\mathrm{Sym}^n f) $.  We use Lemma \ref{aux1} to bound $\re(\frac{L'}{L}(s_{0}, \mathrm{Sym}^n f ))$ and part 2 of Lemma \ref{log-deriv-gamma-factor} to bound the contribution from $\re(\frac{\gamma'}{\gamma}(s_{0}, \mathrm{Sym}^n f ))$.  Collecting these two estimates, we find that
\[
\sum_{\rho}\re\left(\frac{1}{s_0-\rho}\right)\leq \frac{n+1}{2} (\log \mathfrak{q}(f)+\log (n+|T|+3))+\frac{7}{2} \log (n+|T|+3).
\]

We first estimate $N_{ \mathrm{Sym}^n f }(T)$.  If $\rho=\frac{1}{2}+i\gamma$ is a nontrivial zero of $ L(s,\mathrm{Sym}^n f) $ with $|\gamma-T|\leq1$, then
\[
\re\left(\frac{1}{s_0-\rho}\right)\geq\frac{6}{13}.
\]
Since
\[
N_{ \mathrm{Sym}^n f }(T)\leq\frac{13}{6}\sum_{\substack{\rho \\ |\gamma- T|\leq 1}}\re\left(\frac{1}{s_0-\rho}\right)\leq\frac{13}{6}\sum_{\rho}\re\left(\frac{1}{s_0-\rho}\right),
\]
the claimed estimate for $N_{ \mathrm{Sym}^n f }(T)$ follows.

Now we estimate $N_{ \mathrm{Sym}^n f }^*(j)$.  If $T-\frac{1}{2}\leq\gamma\leq T+\frac{1}{2}$, then
\[
\re\left(\frac{1}{s_0-\rho}\right)\geq\frac{3}{5}.
\]
Setting $T=j+\frac{1}{2}$, we have that
\[
N_{ \mathrm{Sym}^n f }^*(j)\leq\frac{5}{3}\sum_{\substack{\rho \\ j\leq|\gamma|\leq j+1}}\re\left(\frac{1}{s-\rho}\right)\leq\frac{5}{3}\sum_{\rho}\re\left(\frac{1}{s-\rho}\right).
\]
The second result now follows.
\end{proof}

We now estimate $\frac{L'}{L}(s, \mathrm{Sym}^n f )$ in certain vertical strips.

\begin{lemma}
\label{aux2}
If $s=\sigma+iT$ with $-1/2\leq\sigma\leq3$ and $|s|\geq1/8$, then
\begin{align*}
\left|\frac{L'}{L}(s, \mathrm{Sym}^n f )-\sum_{|\gamma-T|\leq1}\frac{1}{s-\rho}\right|\leq(n+1) [4.88 &\log \mathfrak{q}(f)+7.35 \log (n+|T|+5)+19.2]\\
&+46.13 \log (n+|T|+5)+40.
\end{align*}
\end{lemma}
\begin{proof}
Logarithmically differentiating the Hadamard product and functional equation for $L(s, \mathrm{Sym}^n f )$, we have
\[
\frac{L'}{L}(s, \mathrm{Sym}^n f )=B_{ \mathrm{Sym}^n f }+\sum_{\rho}\left(\frac{1}{s-\rho}+\frac{1}{\rho}\right)-\frac{n}{2}\log(N)-\frac{\gamma'}{\gamma}(s, \mathrm{Sym}^n f ).
\]
Evaluating this expression at $s=\sigma+iT$ and $3+iT$ and subtracting the resulting equations (in order to eliminate $B_{ \mathrm{Sym}^n f }$), we have
\begin{align*}
\frac{L'}{L}(s, \mathrm{Sym}^n f )-\frac{L'}{L}(3+iT, \mathrm{Sym}^n f )&=-\frac{\gamma'}{\gamma}(s, \mathrm{Sym}^n f )+\frac{\gamma'}{\gamma}(3+iT, \mathrm{Sym}^n f )\\
&+\sum_{\rho}\left(\frac{1}{s-\rho}-\frac{1}{3+iT-\rho}\right).
\end{align*}
Using Lemma \ref{log-deriv-gamma-factor}, we have
\begin{align*}
\left|\frac{L'}{L}(s, \mathrm{Sym}^n f )-\sum_{|\gamma-T|\leq1}\frac{1}{s-\rho}\right|&\leq2(n+1) (\log (k-1)+\log (n+2|T|+15)+8.5)\\
&+8 \log (n+2|T|+15)+31\\
&+\sum_{|\gamma-T|>1}\left|\frac{1}{s-\rho}-\frac{1}{3+iT-\rho}\right|\\
&+\sum_{|\gamma-T|\leq1}\left|\frac{1}{3+iT-\rho}\right|.
\end{align*}

The second sum has $N_{ \mathrm{Sym}^n f }(T)$ terms which each have absolute value at most $\frac{1}{2}$.  Using Lemma \ref{zero-count}, we bound the first sum by
\[
\sum_{|\gamma-T|\geq1}\left|\frac{1}{s-\rho}-\frac{1}{3+iT-\rho}\right|\leq 3\sum_{|\gamma-T|\geq1}\frac{1}{1+|\gamma-T|^2}\leq\frac{87}{20}\sum_{|\gamma-T|\geq1}\frac{5/2}{25/4+|\gamma-T|^2}.
\]
By replacing $2+iT$ with $3+iT$ in \eqref{eqn:sum_zeros}, and using the GRH-dependent equality
\[
\frac{5/2}{25/4+|T-\gamma|^2}=\re\left(\frac{1}{3+iT-\rho}\right),
\]
we find that
\[
\sum_{|\gamma-T|\geq1}\frac{5/2}{25/4+|T-\gamma|^2}\leq \frac{n}{2}\log N+\re\left(\frac{\gamma'}{\gamma}(3+iT, \mathrm{Sym}^n f )\right)+\re\left(\frac{L'}{L}(3+iT, \mathrm{Sym}^n f )\right).
\]
The claimed result follows by collecting the preceding estimates and invoking Lemma \ref{log-deriv-gamma-factor}.
\end{proof}

\section{The Contour Integral}
\label{sec:contour}

Let $n\geq1$, let $T\geq3$ be a number which is not the ordinate of any zero of $L( \mathrm{Sym}^n f ,s)$, and let $U+\frac{1}{4}>0$ be a large integer.  Let $\gamma(T,U)=S_1\cup S_2\cup S_3$, where
\begin{align*}
S_1&=\{-U+it:|t|\leq T\}\\
S_2&=\{\sigma\pm iT:-U\leq\sigma\leq-1/4\},\\
S_3&=\{\sigma\pm iT:-1/4\leq\sigma\leq \sigma_0\}.
\end{align*}
Recall that $\sigma_0=1+\frac{1}{\log x}$.  By the argument principle, if $\rho=\beta+i\gamma$ with $\beta,\gamma\in\mathbb{R}$, we have
\begin{align*}
&-\frac{1}{2\pi i}\int_{\sigma_0-iT}^{\sigma_0+iT}\frac{L'}{L}(s, \mathrm{Sym}^n f )\frac{x^s}{s}~ds\\
&=-\frac{1}{2\pi i}\int_{\gamma(T,U)}\frac{L'}{L}(s, \mathrm{Sym}^n f )\frac{x^s}{s}~ds-\sum_{\substack{L(\rho, \mathrm{Sym}^n f )=0 \\ |\gamma|\leq T \\ -U\leq\beta<1 \\ \rho\neq0}}\frac{x^\rho}{\rho}-\textup{Res}_{s=0}\frac{L'}{L}(s, \mathrm{Sym}^n f )\frac{x^s}{s}.
\end{align*}
(Here, for convenience, $\rho$ can be either trivial or nontrivial.)

The goal of this section is to estimate the integral along $\gamma(T,U)$ in the positive direction.  We first estimate the integral along $S_1$.

\begin{lemma}
\label{S1-integral}
If $2\leq T\leq x$, then
\[
\frac{1}{2\pi i}\int_{S_1}\frac{L'}{L}(s, \mathrm{Sym}^n f )\frac{x^s}{s}~ds=O(x^{-U}U^{-1}T\log(T+U)).
\]
\end{lemma}
\begin{proof}
For weight 2 newforms, this is computed in \cite{Murty}.  The computation is exactly the same when $k>2$.
\end{proof}

We now estimate the integral along $S_2\cup S_3$.  Define

\begin{align*}
I_{1}( \mathrm{Sym}^n f ,x,T,U)=\frac{1}{2\pi i}\int_{-U}^{-1/4}&\left(\frac{x^{\sigma-iT}}{\sigma-iT}\frac{L'}{L}(\sigma-iT, \mathrm{Sym}^n f )\right.\\
&\left.-\frac{x^{\sigma+iT}}{\sigma+iT}\frac{L'}{L}(\sigma+iT, \mathrm{Sym}^n f )\right)~d\sigma
\end{align*}
and
\begin{align*}
I_2( \mathrm{Sym}^n f ,x,T)=\frac{1}{2\pi i}\int_{-1/4}^{\sigma_{0}}&\left(\frac{x^{\sigma-iT}}{\sigma-iT}\frac{L'}{L}(\sigma-iT, \mathrm{Sym}^n f )\right.\\
&\left.-\frac{x^{\sigma+iT}}{\sigma+iT}\frac{L'}{L}(\sigma+iT, \mathrm{Sym}^n f )\right)~d\sigma.
\end{align*}

To estimate $I_{1}( \mathrm{Sym}^n f ,x,T,U)$, we have the following.

\begin{lemma}
\label{S2-estimate}
Let $n\in\mathbb{Z}$ be positive.  If $s=\sigma+iT$, $\sigma\leq-1/4$, and $\left<s\right>\geq1/4$, then
\begin{align*}
\left|\frac{L'}{L}(s, \mathrm{Sym}^n f )\right|&\leq (n+1) (2\log (2 \left| s\right| +n+9)+2\log \mathfrak{q}(f)+25)\\
&+16 \log (2 \left| s\right| +n+9)+10.
\end{align*}
\end{lemma}
\begin{proof}
Logarithmic differentiation yields
\[
-\frac{L'}{L}(s, \mathrm{Sym}^n f )=\frac{n}{2}\log(N)+\frac{L'}{L}(1-s, \mathrm{Sym}^n f )+\frac{\gamma'}{\gamma}(s, \mathrm{Sym}^n f )+\frac{\gamma'}{\gamma}(1-s, \mathrm{Sym}^n f ).
\]
If $\sigma\leq-1/4$, then $\re(1-s)\geq5/4$.  Since $1-s$ is to the right of the critical strip, comparison with the Dirichlet series definition yields $|\frac{L'}{L}(1-s, \mathrm{Sym}^n f )|\leq4(n+1)$.  Since $\re(1-s)\geq5/4$ and $\re(s)\leq-1/4$, our hypotheses ensure that the conditions of Lemma \ref{log-deriv-gamma-factor} are satisfied at both $s$ and $1-s$.  The desired result now follows from applying Lemma \ref{log-deriv-gamma-factor}.
\end{proof}
We now estimate $I_1( \mathrm{Sym}^n f ,x,U,T)$.
\begin{lemma}
\label{I1}
If $x,T\geq4$, then $|\lim_{U\to\infty}I_1( \mathrm{Sym}^n f ,x,U,T)|$ is bounded above by
\begin{align*}
\frac{2}{\pi  T x^{1/4} \log (x)} [(n+1) (\log n+\log\mathfrak{q}(f)&+\log (T+1)+16)\\
&+ 200 \log (n)+200 \log (T+1)+883].
\end{align*}
\end{lemma}
\begin{proof}
Since $T\geq2$, the hypotheses of Lemma \ref{S2-estimate} are satisfied.  Using the approximations $|\frac{x^{\sigma+iT}}{\sigma+iT}|\leq\frac{x^\sigma}{T}$ and $\log(|\sigma+iT|)\leq\log(|\sigma|+2)+\log(|T|+2)$, we obtain the desired upper bound for $I_1( \mathrm{Sym}^n f ,x,U,T)$ by estimating the integrand with Lemma \ref{S2-estimate}.
\end{proof}

Finally, we estimate $I_{2}( \mathrm{Sym}^n f ,x,T)$.

\begin{lemma}
\label{I2}
If $-1/4\leq\sigma\leq \sigma_0$ and $6\leq T\leq x$, then $\left|I_{2}( \mathrm{Sym}^n f ,x,T)\right|$ is bounded above by
\begin{align*}
\frac{3x(n+1)}{T}&\left(20 \log \mathfrak{q}(f)+20  \log (n+T+5)+\frac{2 \log (n+T+5)+\log N+3}{\log x}\right)\\
&+\frac{3x}{T}\left(156 \log (n+T+5)+\frac{7 \log (n+T+5)+7}{\log x}\right).
\end{align*}
\end{lemma}
\begin{proof}
Using Lemma \ref{aux2}, we have
\begin{align*}
&\left|I_2( \mathrm{Sym}^n f ,x,T) \right. \\
                              & \left.-\frac{1}{2\pi i}\int_{-1/4}^{\sigma_{0}} \left(\frac{x^{\sigma-iT}}{\sigma-iT}\sum_{|\gamma+T|\leq 1}\frac{1}{\sigma-iT-\rho}-\frac{x^{\sigma+iT}}{\sigma+iT}\sum_{|\gamma-T|\leq 1}\frac{1}{\sigma+iT-\rho}\right)d\sigma\right|\\
&\leq\frac{3 x}{T \log (x)}((n+1) (2 \log (n+T+5)+\log N+3)+7 \log (n+T+5)+7).
\end{align*}
Using the residue theorem, one has that if $\rho=\frac{1}{2}+i\gamma$ and $\gamma\neq T$, then
\[
\left|\int_{-1/4}^{\sigma_0} \frac{x^{\sigma+iT}}{(\sigma+iT)(\sigma+iT-\rho)}~d\sigma\right|\leq(\sigma_0+3)\frac{x^{\sigma_0}}{(T-1)(\sigma_0-\frac{1}{2})}\leq\frac{22x}{T}.
\]
Thus
\begin{align*}
|I_2( \mathrm{Sym}^n f ,x,T)|&\leq\frac{3 x}{T \log (x)}[(n+1) (2 \log (n+T+5)+\log N+3)\\
&+7 \log (n+T+5)+7]+\frac{44x}{T}N_{ \mathrm{Sym}^n f }(T).
\end{align*}
The claimed result now follows from Lemma \ref{zero-count}.
\end{proof}

\section{The Explicit Formula}
\label{sec:explicit_formula}

We now write an explicit formula for $\psi_{ \mathrm{Sym}^n f }(x)$ as a sum over the zeros of $L(s, \mathrm{Sym}^n f )$.  We have shown that for $n\geq1$,
\[
\left|\psi_{ \mathrm{Sym}^n f }(x)-\left(-\sum_{\substack{\rho = \beta+i\gamma \\ \rho\neq0 \\ |\gamma|\leq T}}\frac{x^\rho}{\rho}-\mathrm{Res}_{s=0}\frac{L'}{L}(s, \mathrm{Sym}^n f )\frac{x^s}{s}\right)\right|\leq \mathcal{E},
\]
where $\rho=\beta+i\gamma$ is a zero of $L(s, \mathrm{Sym}^n f )$ (either trivial or nontrivial) and $\mathcal{E}$ is the sum of the upper bounds in Lemmata \ref{psi-estimate}, \ref{I1}, and \ref{I2}.

\subsection{The residue at $s=0$}

When $4 \nmid n$, $L(s, \mathrm{Sym}^n f )$ is nonzero at $s = 0$, while when
$4 | n$, since $\Gamma(\frac{s}{2})$ arises as part of $\gamma(s, \mathrm{Sym}^n f )$,
$L(0, \mathrm{Sym}^n f ) = 0$. In this case, the Laurent expansion of $-\frac{L'}{L}(s, \mathrm{Sym}^n f )$
at $s = 0$ is
\[
  -\frac{L'}{L}(s, \mathrm{Sym}^n f ) \frac{x^{s}}{s} = \frac{1}{s^{2}}
  + \frac{C - \log(x)}{s} + \cdots,
\]
for some constant $C$. Define
\[
  C_{ \mathrm{Sym}^n f } = \begin{cases}
    -\frac{L'}{L}(0, \mathrm{Sym}^n f ) & \text{ if } 4 \nmid n \\
    \lim_{s \to 0} -s \frac{L'}{L}(s, \mathrm{Sym}^n f ) + \log(x) & \text{ if } 4 | n.
\end{cases}
\]
It follows that 
\[
-\mathrm{Res}_{s=0} \frac{L'}{L}(s, \mathrm{Sym}^n f ) \frac{x^{s}}{s}=\begin{cases}
C_{ \mathrm{Sym}^n f }&\mbox{if $4\nmid n$,}\\
C_{ \mathrm{Sym}^n f }-\log x&\mbox{if $4\mid n$.}
\end{cases}
\]

Since the Dirichlet coefficients of $L(s, \mathrm{Sym}^n f )$ are real, it follows
that $C_{ \mathrm{Sym}^n f }$ is real and we derive the formula (valid when $4 \nmid n$)
of
\[
  C_{ \mathrm{Sym}^n f } := -\frac{L'}{L}( \mathrm{Sym}^n f , 0) = \frac{n}{2} \log N + \frac{\gamma'}{\gamma}( \mathrm{Sym}^n f , 0)
  - 1 + \sum_{\rho = 1/2 + it} \frac{2}{t^{2} + 4}.
\]
This can be bounded using the explicit form of the gamma factors
\eqref{gamma-factor} and using Lemma~\ref{zero-count}. A slight modification
is needed when $4 | n$.

Regardless of whether $4 \mid n$ or not, we have that
\[
  |C_{ \mathrm{Sym}^n f }| \leq \frac{n}{2} \log(N\pi)
  + 3.1 (n+1) \log\mathfrak{q}(f)+ \frac{n+1}{2} \log(n(k-1)) + 25.1 \log(n+1) + 52.
\]

\subsection{The sum over zeros}

The trivial zeros of $L(s, \mathrm{Sym}^n f )$ occur at the poles of the gamma factors.  When $n$ is odd, these poles are at $s=-m-(j+1/2)(k-1)$, where $0\leq j\leq\frac{n+1}{2}$ and $m$ is a nonnegative integer.  When $n$ is even, these poles are at $s=-2m-\frac{n}{2}\bmod 2$ (the case where $s=0$ is already accounted for) and $s=-m-j(k-1)$, where $1\leq j\leq\frac{n}{2}$.  These zeros have multiplicity bounded above by $1+\frac{n}{2}$, and so
\[
\sum_{\textup{$\rho\neq 0$ trivial}}\left|\frac{x^\rho}{\rho}\right|\leq\frac{n+2}{2}\sum_{m=1}^{\infty}\frac{x^{-m/2}}{m/2}\leq\frac{3n}{\sqrt{x}}.
\]

We now estimate the sum over nontrivial zeros.  By the assumption of GRH, we have that if $\rho=\beta+i\gamma$ is a nontrivial zero of $L(s, \mathrm{Sym}^n f )$, then $\beta=\frac{1}{2}$.  Thus
\begin{align*}
\sum_{\substack{\rho = \frac{1}{2}+i\gamma \\ |\gamma|\leq T}}\left|\frac{x^\rho}{\rho}\right|&\leq\sqrt{x}\sum_{\substack{\rho = \frac{1}{2}+i\gamma \\ |\gamma|\leq 4}}2+\sum_{\substack{\rho = \frac{1}{2}+i\gamma \\ 4<|\gamma|\leq T}}\frac{1}{|\gamma|}\\
&\leq2\sqrt{x}\sum_{j=0}^3N_{ \mathrm{Sym}^n f }^*(j)+\sqrt{x}\sum_{4\leq j\leq T}\frac{N_{ \mathrm{Sym}^n f }^*(j)}{j}.
\end{align*}

By choosing $T=5000\sqrt{x}$ and collecting the estimates for the sum over zeros, the residue at $s=0$, and $\mathcal{E}$, we see that if $x\geq5\times 10^5$, then $|\psi_{ \mathrm{Sym}^n f }(x)|$ is bounded above by
\begin{align}
\label{explicit-formula-bound}
|\psi_{ \mathrm{Sym}^n f }(x)|&\leq n \log (n) \left(\frac{5}{12} \sqrt{x} \log x +13 \sqrt{x}\right)\notag\\
&+n \left(\frac{8}{75} \sqrt{x} (\log x)^2+5 (\log\mathfrak{q}(f))\sqrt{x} \log x \right)\notag\\
&+12\sqrt{x}(\log x)(\log n)+3\sqrt{x}(\log x)^2\log\mathfrak{q}(f).
\end{align}
(We note that in the process of simplifying the upper bound for $|\psi_{ \mathrm{Sym}^n f }(x)|$, we use the fact that for non-CM newforms of squarefree level, the smallest value attained by $N(k-1)$ is $11$.)

\section{Proof of Propositions \ref{key-estimates-ab} and \ref{key-estimates-zero}}
\label{proof-ab}

We now prove Proposition \ref{key-estimates-ab}, from which Theorem \ref{main-theorem} was deduced in Section \ref{sec:proofs_of_main_theorems}.

\begin{proof}[Proof of Proposition \ref{key-estimates-ab}]
We first estimate the difference $\left|\psi_{ \mathrm{Sym}^n f }(x)-\theta_{ \mathrm{Sym}^n f }(x)\right|,$ where
\[
\theta_{ \mathrm{Sym}^n f }(x)=\sum_{p\leq x}U_n(\cos \theta_p )\log p.
\]
If $p\nmid N$, then $\Lambda_{ \mathrm{Sym}^n f }(p)=U_n(\cos \theta_p )$.  If $j=p^m$ for some $m\geq1$ and some $p\mid N$, then $|\Lambda_{ \mathrm{Sym}^n f }(j)|\leq p^{-mn/2}\log p$.  For all positive integers $j$, we have $|\Lambda_{ \mathrm{Sym}^n f }(j)|\leq (n+1)\Lambda(j)$.  Therefore, using Rosser and Schoenfeld's \cite{RS} bound of $\sum_{p\leq x}\log p\leq 1.001102x$ for all $x\geq0$, we have that if $x\geq 6$, then
\begin{align*}
\left|\psi_{ \mathrm{Sym}^n f }(x)-\theta_{ \mathrm{Sym}^n f }(x)\right|&\leq (n+1)\sum_{\substack{p^m\leq x \\ m\geq2 \\ p\nmid N}}\log p+\sum_{\substack{p^m\leq x \\ p\mid N}}p^{-mn/2}\log p+(n+1)\sum_{p\mid N}\log p\\
&\leq1.01(n+1)\sqrt{x}+(n+6)\log N.
\end{align*}
We now observe using partial summation that
\[
\sum_{p\leq x}U_n(\cos \theta_p )=\frac{\theta_{ \mathrm{Sym}^n f }(x)}{\log(x)}+\int_{2}^{x}\frac{\theta_{ \mathrm{Sym}^n f }(t)}{t(\log t)^2}dt,
\]
and the claimed bound in Proposition \ref{key-estimates-ab} now follows from the bound \eqref{explicit-formula-bound}.
\end{proof}

Because of the similarity between the proofs of Propositions \ref{key-estimates-ab} and \ref{key-estimates-zero}, we will only sketch the proof of Proposition \ref{key-estimates-zero}.

\begin{proof}[Sketch of the proof of Proposition \ref{key-estimates-zero}]

We will assume the notation in Section 3.  We begin with a smooth version of the explicit formula \cite[Lemma 3.3]{Rouse}, which states that if $G_x(s)=\int_0^\infty g_x(y)y^{s-1}dy$, then for $n\geq0$ and $x>1$, we have
\[
\sum_{j=1}^{\infty}\Lambda_{ \mathrm{Sym}^n f }(j)g_x(j)=\delta_{n,0}G_x(1)-\sum_{\rho}G_x(\rho),
\]
where $\rho$ is a zero of $L(s, \mathrm{Sym}^n f )$.  The trivial zeros yield a contribution of size $O(n)$, and for each nontrivial zero $\rho=\frac{1}{2}+i\gamma$ of $L(s, \mathrm{Sym}^n f )$, it follows from a change of variables that if $h(y)=2\pi g(e^{2\pi y})e^{\pi y}$, then
\[
|G_x(\rho)|=\sqrt{x}|\hat{h}(-\gamma)|\ll \frac{\sqrt{x}}{1+|\gamma|^2},
\]
where $\hat{h}$ denotes the Fourier transform.  Using Lemma \ref{zero-count} and the definition of $G_x(1)$, we then have that
\[
\sum_{j=1}^{\infty}\Lambda_{ \mathrm{Sym}^n f }(j)g_x(j)=\delta_{n,0}x\int_{0}^{\infty}g(t)dt+O(\sqrt{x}[n\log n+n \log\mathfrak{q}(f)]).
\]
The difference between this sum and the sum in Proposition \ref{key-estimates-zero} is $O(n[\sqrt{x}+\log\mathfrak{q}(f)])$, which is obtained by estimating the contribution from prime powers as in the proof of Proposition \ref{key-estimates-ab}.  The result now follows.
\end{proof}

\section{Proofs of Theorems \ref{densities} and \ref{tau-QF}}
\label{sec:lehmer_densities}

We now want to use Theorem \ref{count-primes-zero} to give a lower bound for the density of positive integers $n$ for which $a_f(n)\neq0$.  Using the bound for $ \pi_{0,f}(x) $ in Theorem \ref{count-primes-zero}, we can produce an explicit lower bound for the density of positive integers $n$ such that $a_f(n)\neq0$.  This allows us to address analogues of Lehmer's question (cf. \cite{Leh}) for any newform $f$ with squarefree level and trivial character, assuming Conjecture \ref{automorphy-GRH}.

\begin{lemma}
\label{densitylowbound}
Assume the above notation.  Let $x_0>3$, and define
\[
\omega_{0,f}(x,x_0)=\sum_{j=1}^{1+\lfloor \log_2(x/x_0) \rfloor}\pi_{0,f}\left(\frac{x}{2^j}\right).
\]
Then
\[
\prod_{a_f(p)=0}\left(1-\frac{1}{p+1}\right)>\exp\left(-\int_{x_0}^\infty\frac{\omega_{0,f}(x,x_0)}{x^2+x}~dx\right)\prod_{\substack{p\leq x_0\\a_f(p)=0}}\left(1-\frac{1}{p+1}\right).
\]
\end{lemma}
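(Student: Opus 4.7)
The plan is to split the product in Theorem \ref{Serre-density} into its contributions from primes $p \leq x_0$ and primes $p > x_0$, treat the first factor exactly, and bound the tail factor below by Abel summation combined with a dyadic comparison.

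For the tail, set $\tilde{\pi}_f(x) := \pi_f^*(x) - \pi_f^*(x_0)$, which counts primes $p \in (x_0, x]$ with $a(p) = 0$ and vanishes at $x = x_0$. Mimicking the Abel-summation computation that produces Lemma \ref{density-computation}, but summing only over primes $p > x_0$ against $g(t) = \log(1 - 1/(t+1))$ (whose derivative is $g'(t) = 1/(t(t+1))$), and using Serre's bound $\pi_f^*(x) = o(x/\log x)$ from Theorem \ref{Serre-density} to kill the boundary term at infinity, one obtains
\[
\log \prod_{\substack{p > x_0 \\ a(p) = 0}}\!\left(1 - \frac{1}{p+1}\right) = -\int_{x_0}^\infty \frac{\tilde{\pi}_f(x)}{x^2 + x}\, dx.
\]

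The heart of the argument is then to show $\tilde{\pi}_f(x) \leq \omega_f(x, x_0)$ for every $x \geq x_0$. Setting $J := 1 + \lfloor \log_2(x/x_0) \rfloor$, one checks that $x/2^J \in [x_0/2, x_0)$, so the union of dyadic intervals $[x/2^j, x/2^{j-1}]$ for $j = 1, \ldots, J$ covers $[x/2^J, x] \supseteq (x_0, x]$; hence
\[
\omega_f(x, x_0) = \sum_{j=1}^J \pi_f(x/2^j) \;\geq\; \#\{p \in [x/2^J, x] : a(p) = 0\} \;\geq\; \tilde{\pi}_f(x),
\]
where the first inequality is strict whenever a dyadic endpoint $x/2^j$ is itself a prime with $a = 0$ (double-counted by adjacent $\pi_f$ terms), and the second is strict whenever a zero lies in $[x/2^J, x_0]$. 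Substituting this bound into the integral identity above, exponentiating, and multiplying through by the finite factor $\prod_{p \leq x_0,\, a(p) = 0}(1 - 1/(p+1))$ yields the stated lower bound. Strictness in the final inequality follows because, for any $x_0$ admitting at least one prime $q \in (x_0/2, x_0]$ with $a(q) = 0$, the difference $\omega_f(x, x_0) - \tilde{\pi}_f(x)$ is $\geq 1$ on an interval of $x$ of positive length.

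The main obstacle is the bookkeeping in the Abel-summation step: one must confirm that the boundary term at infinity really vanishes under Serre's pointwise bound, and verify that the correct counting function $\tilde{\pi}_f$ (rather than $\pi_f^*$) arises when the sum is restricted to $p > x_0$. Once that identity is in place, the dyadic comparison is an elementary cover argument, and the lemma falls out by monotonicity of the exponential.
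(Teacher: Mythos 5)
Your argument is correct and matches the paper's in essence: split the product at $x_0$, handle the tail by Abel summation against $g(t)=\log(1-1/(t+1))$ (exactly what Lemma~\ref{density-computation} encodes), and dominate $\tilde{\pi}_f(x)=\pi_f^*(x)-\pi_f^*(x_0)$ by $\omega_f(x,x_0)$ via the dyadic cover $[x/2^J,x]\supseteq(x_0,x]$; this is equivalent to the paper's route, which instead evaluates $\exp(-\int_2^{x_0}\pi_f^*(x)/(x^2+x)\,dx)=(1+1/x_0)^m\prod_j(1-1/(p_j+1))$ and then lets the $(1+1/x_0)^{\pm m}$ factors cancel against the $m=\pi_f^*(x_0)$ term hiding inside $\int_{x_0}^\infty\pi_f^*(x)/(x^2+x)\,dx$. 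You are also right to note that the strict inequality requires $\omega_f(x,x_0)-\tilde{\pi}_f(x)$ to be positive on a set of $x$ of positive measure (for instance, guaranteed by a prime $q\in(x_0/2,x_0]$ with $a(q)=0$), a point the paper's one-line proof leaves implicit; as written, the general argument only yields ``$\geq$'' absent such a hypothesis, though it is satisfied in every application made in the paper.
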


\begin{proof}
This follows directly from the definition of $\pi_{0,f}(x)$ given by \eqref{pi-zero}, an application of Abel summation to the log of the product in \eqref{Serre-density}, and the straightforward computation
\[
\exp\left(-\int_2^{x_0}\frac{\#\{p\leq x:a_f(p)=0\}}{x^2+x}~dx\right)=\left(1+\frac{1}{x_0}\right)^m\prod_{j=1}^m \left(1-\frac{1}{p_j+1}\right),
\]
where $p_1,\ldots,p_m$ are the primes less than $x_0$ for which $a_f(p)=0$.
\label{density-theorem}
\end{proof}

To apply the preceding lemma, it is necessary to bound the number of small primes $p$ for which $a_f(p) = 0$. On page 168 of \cite{Bosman}, Bosman repeats Serre's observation that if $\tau_{12}(p) = 0$, then
$p = hM - 1$, where $M = 3094972416000$ and $h\geq1$. Moreover,
$h \equiv 0, 30, \text{ or } 48 \pmod{49}$ and $h+1$ is a quadratic residue
modulo $23$. These facts will allow us to bound the density above.

In order to obtain results of a similar quality for the other level 1 newforms, we need analogues of the
congruences for $\tau_{12}(n)$ given in \cite{Swinnerton-Dyer} for the
higher weight level 1 newforms. We state these congruences here.

\begin{theorem}
\label{level1cong}
For the weight $16$ form we have
{\small
\begin{align*}
  \tau_{16}(n) &\equiv 6497 \sigma_{15}(n) \pmod{2^{13}}
  \text{ if } n \equiv 7 \pmod{8},\\
  \tau_{16}(n) &\equiv n^{813} \sigma_{2763}(n) \pmod{3^{8}}
  \text{ if } n \equiv 2 \pmod{3},\\
  \tau_{16}(n) &\equiv n^{17} \sigma_f(n) \pmod{5^{2}}
  \text{ if } \gcd(n,5) = 1,\\
  \tau_{16}(n) &\equiv n^{85} \sigma_{139}(n) \pmod{7^{3}}
  \text{ if } \gcd(n,7) = 1,\\
  \tau_{16}(n) &\equiv n \sigma_{3}(n) \pmod{11}
  \text{ if } \gcd(n,11) = 1,\\
  \tau_{16}(n) &\equiv n^{2} \tau_{12}(n) \pmod{13},\\
  \tau_{16}(n) &\equiv 0 \pmod{31} \text{ if }
  \legen{n}{31} = -1,\\
  \tau_{16}(n) &\equiv \sigma_{15}(n) \pmod{3617}.\\
\end{align*}}%
For the weight $18$ form we have
{\small
\begin{align*}
  \tau_{18}(n) &\equiv 865 \sigma_{17}(n) \pmod{2^{13}} \text{ if }
  n \equiv 7 \pmod{8},\\
  \tau_{18}(n) &\equiv n^{117} \sigma_{269}(n) \pmod{3^{6}} \text{ if }
  n \equiv 2 \pmod{3},\\
  \tau_{18}(n) &\equiv n^{22} \sigma_{73}(n) \pmod{5^{3}} \text{ if }
  \gcd(n,5) = 1,\\
  \tau_{18}(n) &\equiv \begin{cases} n \sigma_{3}(n) \pmod{7}\\
  n \sigma_{15}(n) \pmod{7^{2}} \text{ if }
  \legen{n}{7} = -1,\\
  \end{cases}\\
  \tau_{18}(n) &\equiv \begin{cases}n  \sigma_{5}(n) \pmod{11}\\
  n \sigma_{15}(n) \pmod{11^{2}} \text{ if } \legen{n}{11}
  = -1,\\
  \end{cases}\\
  \tau_{18}(n) &\equiv n \sigma_{3}(n) \pmod{13} \text{ if } \gcd(n,13) = 1,\\
  \tau_{18}(n) &\equiv \sigma_{17}(n) \pmod{43867}.
\end{align*}}%
For the weight $20$ form we have
{\small
\begin{align*}
    \tau_{20}(n) &\equiv 2945 \sigma_{19}(n) \pmod{2^{15}} \text{ if }
  n \equiv 7 \pmod{8},\\
  \tau_{20}(n) &\equiv n^{207} \sigma_{91}(n) \pmod{3^{6}} \text{ if }
  n \equiv 2 \pmod{3},\\
  \tau_{20}(n) &\equiv n^{6} \sigma_{7}(n) \pmod{5^{2}} \text{ if }
  \gcd(n,5) = 1,\\
  \tau_{20}(n) &\equiv
  \begin{cases} n^{2} \sigma_{3}(n) \pmod{7}\\
  n^{2} \sigma_{15}(n) \pmod{7^{2}} \text{ if } \legen{n}{7}
  = -1,
  \end{cases}\\
  \tau_{20}(n) &\equiv n \sigma_{7}(n) \pmod{11},\\
  \tau_{20}(n) &\equiv n \sigma_{5}(n) \pmod{13},\\
  \tau_{20}(n) &\equiv n^{2} \tau_{16}(n) \pmod{17},\\
  \tau_{20}(n) &\equiv \sigma_{19}(n) \pmod{283},\\
  \tau_{20}(n) &\equiv \sigma_{19}(n) \pmod{617}.\\
\end{align*}}%
For the weight $22$ form we have
{\small
\begin{align*}
  \tau_{22}(n) &\equiv 3969 \sigma_{21}(n) \pmod{2^{15}} \text{ if }
  n \equiv 7 \pmod{8},\\
  \tau_{22}(n) &\equiv n^{3} \sigma_{15}(n) \pmod{3^{8}} \text{ if }
  n \equiv 2 \pmod{3},\\
  \tau_{22}(n) &\equiv n^{7} \sigma_{7}(n) \pmod{5^{2}},\\
  \tau_{22}(n) &\equiv n^{26} \sigma_{11}(n) \pmod{7^{2}} \text{ if }
  \gcd(n,7) = 1,\\
  \tau_{22}(n) &\equiv \tau_{12}(n) \pmod{11},\\
  \tau_{22}(n) &\equiv n \sigma_{7}(n) \pmod{13},\\
  \tau_{22}(n) &\equiv n \sigma_{3}(n) \pmod{17},\\
  \tau_{22}(n) &\equiv n^{2} \tau_{18}(n) \pmod{19},\\
  \tau_{22}(n) &\equiv \sigma_{21}(n) \pmod{131},\\
  \tau_{22}(n) &\equiv \sigma_{21}(n) \pmod{593}.\\
\end{align*}}%
For the weight $26$ form we have
{\small
\begin{align*}
  \tau_{26}(n) &\equiv 545 \sigma_{25}(n) \pmod{2^{13}} \text{ if }
  n \equiv 7 \pmod{8},\\
  \tau_{26}(n) &\equiv n^{171} \sigma_{169}(n) \pmod{3^{6}} \text{ if }
  n \equiv 2 \pmod{3},\\
  \tau_{26}(n) &\equiv n^{6} \sigma_{13}(n) \pmod{5^{2}},\\
  \tau_{26}(n) &\equiv
  \begin{cases} n^{2} \sigma_{3}(n) \pmod{7}\\
  n^{2} \sigma_{21}(n) \pmod{7^{3}} \text{ if }
  \legen{n}{7} = -1,\\
  \end{cases}\\
  \tau_{26}(n) &\equiv n \sigma_{3}(n) \pmod{11},\\
  \tau_{26}(n) &\equiv n \tau_{12}(n) \pmod{13},\\
  \tau_{26}(n) &\equiv n \sigma_{7}(n) \pmod{17},\\
  \tau_{26}(n) &\equiv n \sigma_{5}(n) \pmod{19},\\
  \tau_{26}(n) &\equiv n^{2} \tau_{22}(n) \pmod{23},\\
  \tau_{26}(n) &\equiv \sigma_{25}(n) \pmod{657931}.
\end{align*}}
\end{theorem}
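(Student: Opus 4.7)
The strategy is uniform across all the congruences asserted in Theorem \ref{level1cong}: for each one I would exhibit a modular form in some $M_{k'}(\Gamma_{0}(N'))$ with integral $q$-expansion whose Fourier coefficients are the (restricted, where applicable) difference between the two sides of the claim, and then invoke Sturm's theorem (Theorem \ref{Sturmtheorem}) to reduce verification to checking the finitely many coefficients with index at most $\tfrac{k'}{12}[\mathrm{SL}_{2}(\Z):\Gamma_{0}(N')]$. Every assertion thus collapses to a finite, mechanical computation once the ambient space is pinned down.

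To construct the two sides as honest modular forms I would combine the tools set up in Section 2. Multiplication by a high-weight Eisenstein series $E_{p^{r-1}(p-1)}$ is harmless modulo $p^{r}$ by Lemma \ref{Eisensteinlem}, so it can be used to match weights without altering $q$-expansions modulo $p^{r}$; Ramanujan's $\theta$-operator produces the $n^{a}\sigma_{b}(n)$-shaped coefficients that appear on the right-hand sides, and by Lemma \ref{Primepower} it remains in a genuine space of modular forms at level divisible by $p^{r}$ (with a loss depending on $\mathrm{ord}_{p}(12)$); and twisting by a quadratic character (Theorem \ref{twisting}) isolates residue classes such as $n\equiv 7\pmod{8}$, $n\equiv 2\pmod{3}$, or $\legen{n}{p}=-1$, at the cost of raising the level to $\Gamma_{0}(Nm^{2})$. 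For a plain congruence $\tau_{k}(n)\equiv\sigma_{k-1}(n)\pmod{p}$ (the last line of each block, where $p$ divides the numerator of $B_{k}$), I would simply use that $M_{k}(\Gamma_{0}(1))$ is spanned by $E_{k}$ and $\Delta_{k}$, so the relevant Sturm bound is the trivial $k/12$. For congruences of the form $\tau_{k_{1}}(n)\equiv n^{a}\tau_{k_{2}}(n)\pmod{\ell}$ comparing two cusp forms (e.g.\ $\tau_{16}\equiv n^{2}\tau_{12}\pmod{13}$ or $\tau_{26}\equiv n^{2}\tau_{22}\pmod{23}$), I would apply $\theta^{a}$ to the lower-weight form via Lemma \ref{Primepower}, possibly absorb weight mismatches into factors of $E_{\ell-1}\equiv 1\pmod{\ell}$, and then compare in a common space.

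The main obstacle is bookkeeping and the size of the computation rather than any conceptual step. In the worst instances one composes twisting by a character of modulus $8$ (level $1\mapsto 64$) with several applications of $\theta$ (weight $k\mapsto k+2a$), so the Sturm bound $\tfrac{k'}{12}[\mathrm{SL}_{2}(\Z):\Gamma_{0}(64)]$ is in the hundreds and one must compute correspondingly many Fourier coefficients of $\Delta_{k}$ and of the auxiliary Eisenstein and twisted series. These checks are carried out on a computer algebra system (Magma, Mathematica, or PARI/GP, as acknowledged by the authors). The real work of the proof is therefore the organization: choosing, for each line of the theorem, the minimal ambient space in which both sides live and the shortest chain of $\theta$-iterates, Eisenstein multiplications, character twists, and $U(d)/V(d)$ operators that puts them there, so that Sturm's bound remains tractable.
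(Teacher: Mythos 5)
Your blueprint matches the paper's proof: the authors establish each congruence by realizing both sides as forms in some $M_{k'}(\Gamma_0(N'))$ using exactly the tools you list — twisting (Theorem~\ref{twisting}) to isolate residue classes, iterated $\theta$ via Lemma~\ref{Primepower} to produce the $n^a\sigma_b(n)$ factors, Eisenstein multiplication via Lemma~\ref{Eisensteinlem} to match weights, $U(d)/V(d)$ to strip off coefficients at the ramified prime — and then invoking Sturm's theorem after a finite machine check. One small quantitative remark: your claim that the Sturm bound ``is in the hundreds'' in the worst instance is too optimistic; for the mod $3^8$ congruence one lands in $M_{4390}(\Gamma_0(3^9))$ and the Sturm bound is roughly $9.6\times 10^6$ (the paper notes the Magma check took about an hour), so the organization you flag as the real work is indeed where care is needed, though this does not affect the correctness of the approach.
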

\begin{proof}
For brevity, we only provide proofs of the congruence for the weight $16$ form.  Twisting by quadratic Dirichlet characters mod $8$ shows that
{\small
\[
\sum_{n \equiv 7 (\text{mod}~8)} \tau_{16}(n) q^{n},\sum_{n \equiv 7 (\text{mod}~8)}
\sigma_{15}(n) q^{n}\in M_{16}(\Gamma_{0}(64)),
\]}%
by Theorem~\ref{twisting}. A computation
shows that the first congruence holds for $n \leq 128$ and Sturm's theorem
implies that it is true for all $n$.

For the second congruence, we start by taking $E_{2764}(z)$ and twisting
to obtain the form $\sum_{n \equiv 2 \pmod{3}} \sigma_{2763}(n) q^{n} \in
M_{2764}(\Gamma_{0}(9))$. Applying $\theta$ 813 times, and using
Lemma~\ref{Primepower} with $N = 3^{9}$ gives that there is a form
in $M_{4390}(\Gamma_{0}(3^{9}))$ congruent to $\sum_{n \equiv 2 \pmod{3}}
n^{813} \sigma_{2763}(n) q^{n}$ modulo $3^{8}$. Taking the form
$\Delta_{12}(z) E_{4}(z) E_{\phi(3^{8})}(z)$ and twisting it to isolate
the residue class $2$ mod $3$ shows that $\sum_{n \equiv 2 \pmod{3}}
\tau_{16}(n) q^{n}$ is also congruent mod $3^{8}$ to a form in
$M_{4390}(\Gamma_{0}(3^{9}))$. We now check the congruence
up to $n = 9600930$ (requiring about 1 hour using Magma \cite{Magma} V2.18) and
invoke Sturm's theorem.

For the third congruence, we let $f(z) \in M_{36}(\Gamma_{0}(25))$ be a form
congruent to $\theta^{17}(E_{2,25})$ modulo $25$ and
$g(z) = E_{4}(z) \Delta_{12}(z) E_{20}(z)$. Then $f(z)  - f(z) | U(5) | V(5)$
and $g(z) - g(z) | U(5) | V(5)$ are both in $M_{36}(\Gamma_{0}(25))$
and we use Sturm's theorem to prove the congruence.

The fourth congruence is similar to the third, and
the fifth congruence follows from the fact that $\theta(E_{4})$ is congruent
(modulo $11$) to a form of weight $16$. The sixth congruence follows
from the fact that $\theta^{2}(\Delta_{12})$ is congruent to a form of weight
$40$. The seventh congruence was proven in \cite{Swinnerton-Dyer} and
the eighth is a consequence of the fact that $3617$ divides the numerator
of $B_{16}$.
\end{proof}

It is clear that if $\tau_{k}(p)=0$ for any $k\in\{12, 16, 18, 20, 22,26\}$, then $p$ must satisfy congruence conditions with a rather large modulus.  For certain ranges of $x$, counting the putative primes $p$ for which $\tau_k(p)=0$ is done more effectively using the variant of the Brun-Titchmarsh theorem proven by Montgomery and Vaughan \cite{MV} instead of Theorem \ref{count-primes-zero}.

\begin{theorem}
\label{thm:MV}
Let $x$ and $y$ be positive real numbers, and let $a$ and $q$ be relatively prime positive integers.  If $\pi(x;q,a)=\#\{p\leq x:p\equiv a\pmod q\},$ then
\[
\pi(x+y;q,a)-\pi(x;q,a)<\frac{2y}{\varphi(q)\log(y/q)}.
\]
\end{theorem}

Using Theorem \ref{count-primes-zero}, Lemma \ref{densitylowbound}, Theorem \ref{level1cong}, and Theorem \ref{thm:MV}, we prove Theorem \ref{densities}.

\begin{proof}[Proof of Theorem \ref{densities}]
For the case where $f\in S_2^{\textup{new}}(\Gamma_0(11))$, we compute a
polynomial $P_{\ell}(x)$ whose splitting field is the fixed field of the kernel
of the projective representation for $2 \leq \ell \leq 19$. Using the
factorization of these polynomials modulo primes $p$, we find that
there are precisely $17857$ primes $p \leq 10^{11}$ for which $a_f(p) = 0$, which enables us to compute
\[
\prod_{\substack{p\leq 10^{11} \\ a_f(p)=0}}\left(1-\frac{1}{p+1}\right)=0.8465247961\ldots
\]
Using Theorem \ref{count-primes-zero} and Lemma \ref{densitylowbound}, we obtain
\[
\exp\left(-\int_{10^{11}}^\infty\frac{\omega_{0,f}(x,10^{11})}{x^2+x}~dx\right)>0.9811342755\ldots,
\]
and the claimed bounds on $D_{f}$ now follow from Lemma \ref{densitylowbound}.

Using the congruences for $\Delta_{12}(z)$ and the computation of the
mod $11$, mod $13$, mod $17$ and mod $19$ Galois representations
by Bosman, we compute using PARI/GP \cite{PARIGP} that there are
precisely $1810$ primes $p < 10^{23}$ that satisfy the conditions
given by Serre, and for which $\tau_{12}(p) \equiv 0 \pmod{11 \times
13 \times 17 \times 19}$.  This allows us to compute
\[
\prod_{\substack{\tau_{12}(p)=0 \\ p\leq 10^{23}}}\left(1-\frac{1}{p+1}\right)>0.99999999999999999980399\ldots
\]
By aforementioned the work of Serre, if $\tau_{12}(p)=0$, then $p$ is in one of 33 possible residue classes modulo $M=23\times49\times3094972416000$.  Using Theorem \ref{thm:MV}, we have that for all $x\geq 10^{23}$,
\[
\#\{p\leq x:\tau_{12}(p)=0\}\leq 1810+\frac{x-10^{23}+2M}{\varphi(M)\log((x-10^{23}+2M)/M)}\times33.
\]
With this upper bound, we calculate
\[
\exp\left(-\int_{10^{23}}^{10^{60}}\frac{\#\{p\leq x:\tau_{12}(p)=0\}}{x^2+x}~dx\right)>0.99999999999984987\ldots
\]
Finally, using Theorem \ref{count-primes-zero}, we find that
\begin{align*}
\exp\left(-\int_{10^{60}}^{\infty}\frac{\#\{p\leq x:\tau_{12}(p)=0\}}{x^2+x}~dx\right)&\geq\exp\left(-\int_{10^{60}}^{\infty}\frac{\omega_{0,\Delta_{12}}(x,10^{60})}{x^2+x}~dx\right)\\
&>0.9999999999999961446\ldots
\end{align*}
Inserting the above work into Lemma \ref{densitylowbound}, we obtain the claimed bound for $D_{\Delta_{12}}$.

The higher weight cases are handled similarly. For the weight $16$ case we
also make use of the fact that the projective mod $59$ representation
has image isomorphic to $S_{4}$. This was conjectured by Serre \cite{SerreBourbaki416} and
Swinnerton-Dyer \cite[pg. 35]{Swinnerton-Dyer}.  The degree $4$ polynomial defining this
$S_{4}$ extension is misprinted there (and corrected in
\cite[pg. 149]{Swinnerton-DyerII}); the correct polynomial is $x^{4}
- x^{3} - 7x^{2} + 11x + 3$. Swinnerton-Dyer's conjecture was proven
by K. Haberland  \cite{Haberland} in 1982. This can also be
proven using the solvable base change result of Langlands and Tunnell
(in an identical way that it was applied in Wiles's proof of Fermat's
Last Theorem).
\end{proof}

\begin{proof}[Proof of Theorem \ref{tau-QF}]
Let $\gamma_1=259$, $\gamma_2=11920$, and $\gamma_3=1060864$.  An equivalent formulation of the theorem is that $\gamma_1\tau_{12}(n)+\gamma_2\tau_{12}(n/2)+\gamma_3\tau_{12}(n/4)=0$ if and only if $\tau_{12}(n)=0$.  Write $n=2^\alpha k$, where $k$ is a positive odd integer and $\alpha$ is a nonnegative integer.  Now,
{\small
\[
\gamma_1\tau_{12}(n)+\gamma_2\tau_{12}(n/2)+\gamma_3\tau_{12}(n/4)=\frac{\tau_{12}(n)}{\tau_{12}(2^\alpha)}(\gamma_1\tau_{12}(2^{\alpha})+\gamma_2\tau_{12}(2^{\alpha-1})+\gamma_3\tau_{12}(2^{\alpha-2})).
\]}%
It is proven in \cite{Leh} that if $p$ is prime and $\tau_{12}(p^\alpha)=0$, then $\tau_{12}(p)=0$ and $\alpha$ is odd.  Therefore, $\tau_{12}(2^\alpha)\neq0$ for all $\alpha$.  Define $f_\alpha=\gamma_1\tau_{12}(2^{\alpha})+\gamma_2\tau_{12}(2^{\alpha-1})+\gamma_3\tau_{12}(2^{\alpha-2}).$  It suffices to show that $f_\alpha\neq0$ for all positive integers $\alpha$.  This is clear for $\alpha=0,1$, so we may assume that $\alpha\geq2$.  By a straightforward calculation, if $\alpha\geq2$, then $f_\alpha=-24f_{\alpha-1}-2048f_{\alpha-2}.$  The closed-form solution to this recursive equation is
\[
f_\alpha=(\lambda_1\phi^\alpha+\lambda_2\overline{\phi}^\alpha)/\lambda_0,
\]
where $\lambda_0=1904(3-\sqrt{-119})$, $\lambda_1=1479408-370960\sqrt{-119}$, $\lambda_2=797895-388141\sqrt{-119}$, and $\phi=-12+4\sqrt{-119}$.

Let $K=\mathbb{Q}(\sqrt{-119})$.  It now suffices to prove $\frac{-\lambda_1}{\lambda_2}$ is not of the form $(\frac{\beta_1}{\beta_2})^\alpha$, where $\alpha\geq2$ and $\beta_1$ and $\beta_2$ are in the ring of integers $\mathcal{O}_K$.  We prove the contrapositive.  If $\frac{-\lambda_1}{\lambda_2}=(\frac{\beta_1}{\beta_2})^\alpha$, then $-\lambda_1 \beta_2^\alpha=\lambda_2 \beta_1^\alpha$.  Consider the ideals $I_1=(-\lambda_1\beta_2^\alpha)$ and $I_2=(\lambda_1 \beta_1^\alpha)$ in $\mathcal{O}_K$.  By a computation in Magma, there exists a prime ideal $\mathfrak{p}$ in $\mathcal{O}_K$ such that $3,\lambda_2\in\mathfrak{p}$ and $-\lambda_1\notin\mathfrak{p}$.  Furthermore, the power of $\mathfrak{p}$ in the prime factorization of the ideal $(\lambda_2)$ is 1.  The power of $\mathfrak{p}$ in the prime factorization of $I_1$ is $\eta_1\alpha$, where $\eta_1$ is the power of $\mathfrak{p}$ in the prime factorization of the ideal $(\beta_2)$.  The power of $\mathfrak{p}$ in the prime factorization of $I_2$ is $\eta_2\alpha+1$, where $\eta_2$ is the power of $\mathfrak{p}$ in the prime factorization of the ideal $(\beta_1)$.  Because $I_1=I_2$, we have $\eta_1 \alpha=\eta_2\alpha+1$.  Thus $0\equiv1\pmod{\alpha}$, so $\alpha=1$.
\end{proof}

\bibliographystyle{plain}
\bibliography{Paper5}

\begin{thebibliography}{10}

\bibitem{AIW}
Sara Arias-de Reyna, Ilker Inam, and Gabor Wiese.
\newblock On conjectures of {S}ato-{T}ate and {B}ruinier-{K}ohnen.
\newblock {\em Ramanujan J.}, 36(3):455--481, 2015.

\bibitem{atkin-lehner}
A.~O.~L. Atkin and J.~Lehner.
\newblock Hecke operators on {$\Gamma _{0}(m)$}.
\newblock {\em Math. Ann.}, 185:134--160, 1970.

\bibitem{Sato-Tate}
Tom Barnet-Lamb, David Geraghty, Michael Harris, and Richard Taylor.
\newblock A family of {C}alabi-{Y}au varieties and potential automorphy {II}.
\newblock {\em Publ. Res. Inst. Math. Sci.}, 47(1):29--98, 2011.

\bibitem{Swinnerton-DyerII}
B.~J. Birch and W.~Kuyk, editors.
\newblock {\em Modular functions of one variable. {IV}}.
\newblock Lecture Notes in Mathematics, Vol. 476. Springer-Verlag, Berlin-New
  York, 1975.

\bibitem{Magma}
Wieb Bosma, John Cannon, and Catherine Playoust.
\newblock The {M}agma algebra system. {I}. {T}he user language.
\newblock {\em J. Symbolic Comput.}, 24(3-4):235--265, 1997.
\newblock Computational algebra and number theory (London, 1993).

\bibitem{BK}
A.~Bucur and K.~Kedlaya.
\newblock An application of the effective {S}ato-{T}ate conjecture.
\newblock Preprint.

\bibitem{CM}
J.~Cogdell and P.~Michel.
\newblock On the complex moments of symmetric power {$L$}-functions at {$s=1$}.
\newblock {\em Int. Math. Res. Not.}, (31):1561--1617, 2004.

\bibitem{CIS}
J.~Brian Conrey, Henryk Iwaniec, and Kannan Soundararajan.
\newblock Critical zeros of {D}irichlet {$L$}-functions.
\newblock {\em J. Reine Angew. Math.}, 681:175--198, 2013.

\bibitem{Davenport}
Harold Davenport.
\newblock {\em Multiplicative number theory}, volume~74 of {\em Graduate Texts
  in Mathematics}.
\newblock Springer-Verlag, New York, third edition, 2000.
\newblock Revised and with a preface by Hugh L. Montgomery.

\bibitem{Bosman}
Bas Edixhoven and Jean-Marc Couveignes, editors.
\newblock {\em Computational aspects of modular forms and {G}alois
  representations}, volume 176 of {\em Annals of Mathematics Studies}.
\newblock Princeton University Press, Princeton, NJ, 2011.
\newblock How one can compute in polynomial time the value of Ramanujan's tau
  at a prime.

\bibitem{Elkies-1}
Noam~D. Elkies.
\newblock The existence of infinitely many supersingular primes for every
  elliptic curve over {${\bf Q}$}.
\newblock {\em Invent. Math.}, 89(3):561--567, 1987.

\bibitem{Elkies-2}
Noam~D. Elkies.
\newblock Distribution of supersingular primes.
\newblock {\em Ast\'erisque}, (198-200):127--132 (1992), 1991.
\newblock Journ{\'e}es Arithm{\'e}tiques, 1989 (Luminy, 1989).

\bibitem{FM}
Etienne Fouvry and M.~Ram Murty.
\newblock On the distribution of supersingular primes.
\newblock {\em Canad. J. Math.}, 48(1):81--104, 1996.

\bibitem{GJ}
Stephen Gelbart and Herv{\'e} Jacquet.
\newblock A relation between automorphic representations of {${\rm GL}(2)$} and
  {${\rm GL}(3)$}.
\newblock {\em Ann. Sci. \'Ecole Norm. Sup. (4)}, 11(4):471--542, 1978.

\bibitem{Haberland}
Klaus Haberland.
\newblock Perioden von {M}odulformen einer {V}ariabler and
  {G}ruppencohomologie. {I}, {II}, {III}.
\newblock {\em Math. Nachr.}, 112:245--282, 283--295, 297--315, 1983.

\bibitem{IrelandRosen}
Kenneth Ireland and Michael Rosen.
\newblock {\em A classical introduction to modern number theory}, volume~84 of
  {\em Graduate Texts in Mathematics}.
\newblock Springer-Verlag, New York, second edition, 1990.

\bibitem{IK}
H.~Iwaniec and E.~Kowalski.
\newblock {\em Analytic number theory}, volume~53 of {\em American Mathematical
  Society Colloquium Publications}.
\newblock American Mathematical Society, Providence, RI, 2004.

\bibitem{Kil}
L.~J.~P. Kilford.
\newblock {\em Modular forms}.
\newblock Imperial College Press, London, 2008.
\newblock A classical and computational introduction.

\bibitem{Kim}
Henry~H. Kim.
\newblock Functoriality for the exterior square of {${\rm GL}_4$} and the
  symmetric fourth of {${\rm GL}_2$}.
\newblock {\em J. Amer. Math. Soc.}, 16(1):139--183, 2003.
\newblock With appendix 1 by Dinakar Ramakrishnan and appendix 2 by Kim and
  Peter Sarnak.

\bibitem{KS1}
Henry~H. Kim and Freydoon Shahidi.
\newblock Cuspidality of symmetric powers with applications.
\newblock {\em Duke Math. J.}, 112(1):177--197, 2002.

\bibitem{KS2}
Henry~H. Kim and Freydoon Shahidi.
\newblock Functorial products for {${\rm GL}_2\times{\rm GL}_3$} and the
  symmetric cube for {${\rm GL}_2$}.
\newblock {\em Ann. of Math. (2)}, 155(3):837--893, 2002.
\newblock With an appendix by Colin J. Bushnell and Guy Henniart.

\bibitem{LO}
J.~C. Lagarias and A.~M. Odlyzko.
\newblock Effective versions of the {C}hebotarev density theorem.
\newblock In {\em Algebraic number fields: {$L$}-functions and {G}alois
  properties ({P}roc. {S}ympos., {U}niv. {D}urham, {D}urham, 1975)}, pages
  409--464. Academic Press, London, 1977.

\bibitem{Lang-Trotter}
Serge Lang and Hale Trotter.
\newblock {\em Frobenius distributions in {${\rm GL}_{2}$}-extensions}.
\newblock Lecture Notes in Mathematics, Vol. 504. Springer-Verlag, Berlin,
  1976.
\newblock Distribution of Frobenius automorphisms in
  ${{\rm{G}}L}_{2}$-extensions of the rational numbers.

\bibitem{Leh}
D.~H. Lehmer.
\newblock The vanishing of {R}amanujan's function {$\tau(n)$}.
\newblock {\em Duke Math. J.}, 14:429--433, 1947.

\bibitem{Mascot}
Nicolas Mascot.
\newblock Tables of modular galois representations.
\newblock Preprint.

\bibitem{BMazur}
Barry Mazur.
\newblock Finding meaning in error terms.
\newblock {\em Bull. Amer. Math. Soc. (N.S.)}, 45(2):185--228, 2008.

\bibitem{MV}
H.~L. Montgomery and R.~C. Vaughan.
\newblock The large sieve.
\newblock {\em Mathematika}, 20:119--134, 1973.

\bibitem{Montgomery}
Hugh~L. Montgomery.
\newblock {\em Ten lectures on the interface between analytic number theory and
  harmonic analysis}, volume~84 of {\em CBMS Regional Conference Series in
  Mathematics}.
\newblock Published for the Conference Board of the Mathematical Sciences,
  Washington, DC; by the American Mathematical Society, Providence, RI, 1994.

\bibitem{MMS}
M.~Ram Murty, V.~Kumar Murty, and N.~Saradha.
\newblock Modular forms and the {C}hebotarev density theorem.
\newblock {\em Amer. J. Math.}, 110(2):253--281, 1988.

\bibitem{Murty}
V.~Kumar Murty.
\newblock Explicit formulae and the {L}ang-{T}rotter conjecture.
\newblock {\em Rocky Mountain J. Math.}, 15(2):535--551, 1985.
\newblock Number theory (Winnipeg, Man., 1983).

\bibitem{Murty2}
V.~Kumar Murty.
\newblock Modular forms and the {C}hebotarev density theorem. {II}.
\newblock In {\em Analytic number theory ({K}yoto, 1996)}, volume 247 of {\em
  London Math. Soc. Lecture Note Ser.}, pages 287--308. Cambridge Univ. Press,
  Cambridge, 1997.

\bibitem{OnoBook}
Ken Ono.
\newblock {\em The web of modularity: arithmetic of the coefficients of modular
  forms and {$q$}-series}, volume 102 of {\em CBMS Regional Conference Series
  in Mathematics}.
\newblock Published for the Conference Board of the Mathematical Sciences,
  Washington, DC, 2004.

\bibitem{Ono}
Ken Ono and K.~Soundararajan.
\newblock Ramanujan's ternary quadratic form.
\newblock {\em Invent. Math.}, 130(3):415--454, 1997.

\bibitem{RS}
J.~Barkley Rosser and Lowell Schoenfeld.
\newblock Sharper bounds for the {C}hebyshev functions {$\theta (x)$} and
  {$\psi (x)$}.
\newblock {\em Math. Comp.}, 29:243--269, 1975.
\newblock Collection of articles dedicated to Derrick Henry Lehmer on the
  occasion of his seventieth birthday.

\bibitem{Rouse}
Jeremy Rouse.
\newblock Atkin-{S}erre type conjectures for automorphic representations on
  {${\rm GL}(2)$}.
\newblock {\em Math. Res. Lett.}, 14(2):189--204, 2007.

\bibitem{SchoenfeldGRH}
Lowell Schoenfeld.
\newblock Sharper bounds for the {C}hebyshev functions {$\theta (x)$} and
  {$\psi (x)$}. {II}.
\newblock {\em Math. Comp.}, 30(134):337--360, 1976.

\bibitem{SerreBourbaki416}
Jean-Pierre Serre.
\newblock Congruences et formes modulaires [d'apr\`es {H}. {P}. {F}.
  {S}winnerton-{D}yer].
\newblock In {\em S\'eminaire {B}ourbaki, 24e ann\'ee (1971/1972), {E}xp. {N}o.
  416}, pages 319--338. Lecture Notes in Math., Vol. 317. Springer, Berlin,
  1973.

\bibitem{Ser1}
Jean-Pierre Serre.
\newblock Quelques applications du th\'eor\`eme de densit\'e de {C}hebotarev.
\newblock {\em Inst. Hautes \'Etudes Sci. Publ. Math.}, (54):323--401, 1981.

\bibitem{Sturm}
Jacob Sturm.
\newblock On the congruence of modular forms.
\newblock In {\em Number theory ({N}ew {Y}ork, 1984--1985)}, volume 1240 of
  {\em Lecture Notes in Math.}, pages 275--280. Springer, Berlin, 1987.

\bibitem{Swinnerton-Dyer}
H.~P.~F. Swinnerton-Dyer.
\newblock On {$l$}-adic representations and congruences for coefficients of
  modular forms.
\newblock In {\em Modular functions of one variable, {III} ({P}roc. {I}nternat.
  {S}ummer {S}chool, {U}niv. {A}ntwerp, 1972)}, pages 1--55. Lecture Notes in
  Math., Vol. 350. Springer, Berlin, 1973.

\bibitem{PARIGP}
{The PARI~Group}, Bordeaux.
\newblock {\em {PARI/GP, version {\tt 2.3.4}}}, 2009.

\bibitem{Zywina}
D.~Zywina.
\newblock Bounds for the {L}ang-{T}rotter conjectures.
\newblock Preprint.

\end{thebibliography}
\end{document}